\newtheorem*{theoA}{Theorem A}
\newtheorem*{theoB}{Theorem B}
\newtheorem*{theoC}{Theorem C}
\newtheorem*{theoD}{Theorem D}
\newtheorem*{theoE}{Theorem E}
\newtheorem*{theoF}{Theorem F}
\newtheorem*{theoG}{Theorem G}
\newtheorem{theo}{Theorem}[section]
\newtheorem{lem}{Lemma}[section]
\newtheorem{exm}{Example}[section]
\newtheorem{defi}{Definition}[section]
\newtheorem{question}{Question}[section]
\newcommand{\ol}{\overline}
\newcommand{\be}{\begin{equation}}
\newcommand{\ee}{\end{equation}}
\newcommand{\beas}{\begin{eqnarray*}}
\newcommand{\eeas}{\end{eqnarray*}}
\newcommand{\bea}{\begin{eqnarray}}
\newcommand{\eea}{\end{eqnarray}}
\newcommand{\lra}{\longrightarrow}
\numberwithin{equation}{section}
\begin{document}
\title[Further investigations on a question of Zhang and L\"{u} ]{Further investigations on a question of Zhang and L\"{u}}
\date{}
\author[A. Banerjee and B. Chakraborty ]{ Abhijit Banerjee  and Bikash Chakraborty. }
\date{}
\address{ Department of Mathematics, University of Kalyani, West Bengal 741235, India.}
\email{abanerjee\_kal@yahoo.co.in, abanerjee\_kal@rediffmail.com
}
\email{bikashchakraborty.math@yahoo.com, bchakraborty@klyuniv.ac.in}
\maketitle
\let\thefootnote\relax
\footnotetext{2010 Mathematics Subject Classification: 30D35.}
\footnotetext{Key words and phrases: Meromorphic function, differential monomial, small function.}
\footnotetext{Type set by \AmS -\LaTeX}
\setcounter{footnote}{0}

\begin{abstract}
In the paper taking the question of Zhang and L\"{u} \cite{11} into background, we present one theorem which will improve and extend results of Banerjee-Majumder \cite{1} and a recent result of Li-Huang \cite{5a}.  \end{abstract}
\section{Introduction Definitions and Results}
Let $f$ be a non-constant meromorphic function defined in the open complex plane $\mathbb{C}$. We adopt the standard notations of the Nevanlinna theory of meromorphic functions as explained in \cite{4}. \par
If for some $a\in\mathbb{C}\cup\{\infty\}$, $f$ and $g$ have the same set of $a$-points with the same multiplicities, we say that $f$ and $g$ share the value $a$ CM (counting multiplicities) and if we do not consider the multiplicities then $f$, $g$ are said to share the value $a$ IM (ignoring multiplicities).
 When $a=\infty$ the zeros of $f-a$ means the poles of $f$.\par
It will be convenient to let $E$ denote any set of positive real numbers of finite linear measure, not necessarily the same at each occurrence.
For any non-constant meromorphic function $f$, we denote by $S(r, f)$ any quantity satisfying $$S(r, f) = o(T(r, f))\;\;\;\;\;\;\;\;\;\;\ (r\lra \infty, r\not\in E).$$

A meromorphic function $a=a(z)(\not\equiv \infty)$ is called a small function with respect to $f$ provided that $T(r,a)=S(r,f)$ as $r\lra \infty, r\not\in E$.
If $a=a(z)$ is a small function we define that $f$ and $g$ share $a$ IM or $a$ CM according as $f-a$ and $g-a$ share $0$ IM or $0$ CM respectively.
We use $I$ to denote any set of infinite linear measure of $0<r<\infty$.\\
Also it is known to us that the hyper order of $f$, denoted by $\rho _{2}(f)$, is defined by \beas \rho _{2}(f)=\limsup\limits_{r\lra\infty}\frac{ \log \log T(r,f)}{\log r}.\eeas
\par The subject on sharing values between entire functions and their derivatives was first studied by Rubel and Yang (\cite{8}).
\par In 1977, they proved that if a non-constant entire function $f$ and $f^{'}$ share two distinct finite numbers $a$, $b$ CM, then $f = f^{'}$.\par
In 1979, analogous result for IM sharing was obtained by Mues and Steinmetz in the following manner.
\begin{theoA}(\cite{7}) Let $f$ be a non-constant entire function. If $f$ and $f^{'}$ share two distinct values $a$, $b$ IM then $f^{'}\equiv f$.\end{theoA}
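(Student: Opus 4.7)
The strategy, due essentially to Mues--Steinmetz, is to upgrade the given IM sharing hypothesis to CM sharing and then apply the Rubel--Yang theorem stated just before Theorem A.

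Since $f$ is entire, so is $f'$, $\ol N(r,f) = \ol N(r,f') = 0$, and the logarithmic derivative lemma yields $T(r,f') \le T(r,f) + S(r,f)$. Because $a \ne b$, at most one of them is zero; assume WLOG $a \ne 0$. If $z_0$ were a zero of $f - a$ of multiplicity $\ge 2$, then $f'(z_0) = 0 \ne a$, contradicting $f'(z_0) = a$ from the sharing hypothesis; hence every zero of $f - a$ is simple, and similarly for $b$ when $b \ne 0$ (the case $b = 0$ needs a small separate local argument). Introduce the two auxiliary functions
\[
\phi := \frac{f' - a}{f - a}, \qquad \psi := \frac{f' - b}{f - b},
\]
which are entire by the shared-zero hypothesis. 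A direct computation gives $\phi - \psi = (a - b)(f' - f)/\bigl((f - a)(f - b)\bigr)$, so proving $f \equiv f'$ is equivalent to proving $\phi \equiv \psi$. Writing $\phi = f'/(f - a) - a/(f - a)$ and applying the logarithmic derivative lemma to the first summand and the First Main Theorem to the second yields
\[
T(r, \phi) \le T(r, f) - N(r, 1/(f - a)) + S(r, f),
\]
with the analogous bound for $\psi$.

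Suppose for contradiction that $\phi \not\equiv \psi$. Solving $f' = \phi(f - a) + a = \psi(f - b) + b$ for $f$ expresses $f$ as a rational combination of $\phi$ and $\psi$, so $T(r, f) \le T(r, \phi) + T(r, \psi) + O(1)$. Combining this with the two characteristic bounds above and with the Second Main Theorem applied to $f$ with the three targets $a, b, \infty$ (which, in view of $\ol N(r, f) = 0$ and the simple-zero observation, reads $T(r, f) \le N(r, 1/(f - a)) + N(r, 1/(f - b)) + S(r, f)$) forces equality throughout up to $S(r, f)$; in particular the defect relation for $f$ is saturated and $T(r, \phi) = N(r, 1/(f - b)) + S(r, f)$. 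To pass from this saturation to genuine CM sharing I would then study the auxiliary function
\[
H := \left(\frac{f''}{f' - a} - \frac{f'}{f - a}\right) - \left(\frac{f''}{f' - b} - \frac{f'}{f - b}\right),
\]
which satisfies $m(r, H) = S(r, f)$ by the logarithmic derivative lemma and whose polar divisor consists of simple poles at each common zero of $f - c$ and $f' - c$ ($c \in \{a, b\}$) where the multiplicity $q$ of $f' - c$ exceeds the multiplicity $1$ of $f - c$, with residue $\pm (q - 1)$. Comparing the pole-counting of $H$ with the saturated defect relation and performing the residue computation at each such point forces all excess multiplicities to vanish, i.e., $f$ and $f'$ share $a, b$ CM; Rubel--Yang then gives $f \equiv f'$.

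The main obstacle, as I see it, is the final residue-theoretic step: the saturation of the defect relation by itself only bounds the counting function of multiplicity mismatches up to $S(r, f)$, but actually eliminating each individual mismatch requires the careful local analysis of $H$, since $H$ could a priori have $S(r, f)$-small characteristic without vanishing identically. A secondary subtlety is the case $a = 0$ or $b = 0$, where the simple-zero observation at the outset fails and a separate local argument (for instance, directly analysing the Taylor expansion of $f$ at its zeros) is needed.
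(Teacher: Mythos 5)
First, a point of calibration: the paper offers no proof of Theorem A at all --- it is quoted verbatim from Mues--Steinmetz \cite{7} as background motivation --- so your attempt can only be measured against the known literature proof, not against anything in this paper. Measured that way, your proposal has a genuine gap, and you have in fact located it yourself: the entire content of the theorem is the passage from IM sharing to the conclusion, and your plan reduces this to ``upgrade IM to CM, then quote Rubel--Yang,'' with the upgrade left as an acknowledged obstacle. The saturation of the Second Main Theorem that you derive only gives you control of \emph{counting functions} of multiplicity mismatches up to $S(r,f)$; since your auxiliary function $H$ has $m(r,H)=S(r,f)$ and its poles carry bounded integer residues, nothing in that estimate forces $H\equiv 0$ --- a function with $T(r,H)=S(r,f)$ and nonzero residues at infinitely many mismatch points is perfectly consistent with everything you have established. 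Eliminating the mismatches requires the delicate case analysis that constitutes the bulk of the Mues--Steinmetz paper (and their argument does not in fact proceed by reducing to CM sharing plus Rubel--Yang), so the ``final residue-theoretic step'' is not a technical loose end but the missing theorem.

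Two secondary problems would also need repair even if the main step were supplied. First, when $b=0$ the function $\psi=f'/f$ acquires a simple pole at every zero of $f$ (a zero of $f$ of multiplicity $m$ forces, by IM sharing, $m\geq 2$, and then $f'/f$ has a simple pole there), so $\psi$ is not entire, the identity $\phi-\psi=(a-b)(f'-f)/\bigl((f-a)(f-b)\bigr)$ can no longer be exploited as stated, and the reduced counting function $\ol N\bigl(r,0;f\bigr)$ in the Second Main Theorem no longer equals $N\bigl(r,0;f\bigr)$; your ``small separate local argument'' is not small. Second, from $f=\bigl(a\phi-b\psi+(b-a)\bigr)/(\phi-\psi)$ the naive estimate gives $T(r,f)\leq 2T(r,\phi)+2T(r,\psi)+O(1)$, not $T(r,f)\leq T(r,\phi)+T(r,\psi)+O(1)$; with the factor $2$ your chain of inequalities no longer closes to force equality throughout, so even the ``saturation'' you build the residue analysis on is not yet justified as written.
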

Subsequently, similar considerations have been made with respect to higher derivatives and more general differential expressions as well.

Above theorems motivate the researchers to study the relation between an entire function and its derivative counterpart for one CM shared value. In 1996, in this direction the following famous conjecture was proposed by Br\"{u}ck (\cite{3}) :\\

{\it {\bf Conjecture :} Let $f$ be a non-constant entire function such that the hyper order $\rho _{2}(f)$ of $f$ is not a positive integer or infinite. If $f$ and $f^{'}$ share a finite value $a$ CM, then $\frac{f^{'}-a}{f-a}=c$, where $c$ is a non-zero constant.} \par
Br\"{u}ck himself proved the conjecture for $a=0$. For $a\not=0$, Br\"{u}ck (\cite{3}) obtained the following result in which additional supposition was required.
\begin{theoB}(\cite{3}) Let $f$ be a non-constant entire function. If $f$ and $f^{'}$ share the value $1$ CM and if $N(r,0;f^{'})=S(r,f)$ then $\frac{f^{'}-1}{f-1}$ is a nonzero constant.\end{theoB}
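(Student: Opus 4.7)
My plan is to pull an exponential factor out of the shared-value condition, show that its logarithmic derivative is a small function of $f$, and then exploit the hypothesis $N(r,0;f')=S(r,f)$ to force that exponent to be a constant.

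\smallskip

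\textbf{Setup and the key ratio.} Since $f$ is entire and $f$, $f'$ share $1$ CM, the zeros of $f-1$ and $f'-1$ coincide with identical multiplicities. Hence
\[
  g\;:=\;\frac{f'-1}{f-1}
\]
is an entire, zero-free function, and we may write $g=e^{\alpha}$ for some entire function $\alpha$. The desired conclusion is exactly $\alpha'\equiv 0$. Taking the logarithmic derivative of $f'-1=e^{\alpha}(f-1)$ gives
\[
  \alpha'\;=\;\frac{f''}{f'-1}\;-\;\frac{f'}{f-1},
\]
and both summands on the right are logarithmic derivatives of entire functions whose characteristics are bounded by $T(r,f)+S(r,f)$. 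The standard lemma on logarithmic derivatives therefore yields $m(r,\alpha')=S(r,f)$; since $\alpha'$ is entire, this upgrades to $T(r,\alpha')=S(r,f)$, so $\alpha'$ is a small function of $f$.

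\smallskip

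\textbf{The derived differential identity.} Differentiating $f'-1=e^{\alpha}(f-1)$ once and rearranging produces
\[
  f''+\alpha'\;=\;(e^{\alpha}+\alpha')\,f'.
\]
I first dispose of the possibility $e^{\alpha}+\alpha'\equiv 0$: this would give $(e^{-\alpha})'=-\alpha'e^{-\alpha}=1$, and integrating yields $e^{-\alpha}=z-z_{0}$ for some $z_{0}\in\mathbb{C}$, contradicting the fact that $e^{-\alpha}$ has no zeros. Hence $e^{\alpha}+\alpha'\not\equiv 0$, and we obtain the representation
\[
  f'\;=\;\frac{f''+\alpha'}{e^{\alpha}+\alpha'}.
\]

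\smallskip

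\textbf{Closing the argument (main obstacle).} The heart of the proof is to use the hypothesis $N(r,0;f')=S(r,f)$ together with $T(r,\alpha')=S(r,f)$ to conclude $\alpha'\equiv 0$. The representation above says that zeros of $f''+\alpha'$ not cancelled by zeros of $e^{\alpha}+\alpha'$ must appear as zeros of $f'$, and the hypothesis confines those to a small set. Combined with the entire-function estimate $T(r,f')=T(r,f)+S(r,f)$ and the CM-sharing identity $N(r,1;f)=N(r,1;f')$, I plan to bootstrap these ingredients into an estimate of the form $T(r,e^{\alpha})=S(r,f)$; once $e^{\alpha}$ is a small function of $f$, the relation $f-1=e^{-\alpha}(f'-1)$ becomes a linear relation between $f$ and $f'$ with small-function coefficients, and a Mohon'ko/Tumura--Clunie type reduction forces $\alpha$ to be a constant. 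The technical obstacle is precisely this Nevanlinna bookkeeping: one has to match multiplicities of zeros of $f''+\alpha'$ against those of $e^{\alpha}+\alpha'$ without losing the small-function tag on the remainder terms, and that is where the argument will require the most care.
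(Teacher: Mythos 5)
The paper itself offers no proof of Theorem B: it is quoted verbatim from Br\"{u}ck \cite{3} as background, so your attempt has to be measured against Br\"{u}ck's original argument. Your opening is correct and is in fact how that argument begins: since $f$ and $f'$ share $1$ CM, $(f'-1)/(f-1)=e^{\alpha}$ with $\alpha$ entire; the logarithmic derivative lemma gives $m(r,\alpha')=S(r,f)$ and hence $T(r,\alpha')=S(r,f)$ because $\alpha'$ is entire; differentiation yields the identity $f''+\alpha'=(e^{\alpha}+\alpha')f'$; and the degenerate case $e^{\alpha}+\alpha'\equiv 0$ is correctly excluded by integrating $(e^{-\alpha})'=1$ to get $e^{-\alpha}=z-z_{0}$, impossible for a zero-free function. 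But the proof stops exactly where the content of the theorem lies: the hypothesis $N(r,0;f')=S(r,f)$ is never deployed in any completed estimate. Your final section is an announced plan (``I plan to bootstrap\ldots''), with the decisive step explicitly left as an ``obstacle,'' so what you have is a setup, not a proof.

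Beyond mere incompleteness, the plan as stated has two concrete weak points. First, no mechanism is given for the intermediate target $T(r,e^{\alpha})=S(r,f)$. The natural attempts through the representation $f'=(f''+\alpha')/(e^{\alpha}+\alpha')$ do not work mechanically: by the First Fundamental Theorem and your hypothesis, $m(r,1/f')=T(r,f')-N(r,0;f')+O(1)=T(r,f')+S(r,f)$, so any estimate of $m(r,e^{\alpha}+\alpha')$ that passes through $m(r,1/f')$ (for instance $m(r,e^{\alpha}+\alpha')\leq m(r,f''/f')+m(r,\alpha')+m(r,1/f')$) returns only the trivial bound $T(r,f')+S(r,f)$; and the proposed matching of zeros of $f''+\alpha'$ against zeros of $e^{\alpha}+\alpha'$ is not controlled by the hypothesis, which restricts zeros of $f'$ only. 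Second, even granting $T(r,e^{\alpha})=S(r,f)$, the conclusion does not follow from a Mohon'ko/Tumura--Clunie reduction alone: a small function need not be constant, and the linear relation $f'=e^{\alpha}f+(1-e^{\alpha})$ with a small nonconstant coefficient is not by itself contradictory. To rule it out one must invoke $N(r,0;f')=S(r,f)$ a second time --- for example, noting that every point where $f=1-e^{-\alpha}$ satisfies $f'=e^{\alpha}(1-e^{-\alpha})+1-e^{\alpha}=0$, so that $\overline{N}(r,1-e^{-\alpha};f)\leq \overline{N}(r,0;f')+S(r,f)=S(r,f)$, and then feeding this into a second-main-theorem argument with small targets --- and nothing of that kind appears in your write-up. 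In short: the first page of Br\"{u}ck's proof is accurately reproduced, but the heart of the theorem, where the hypothesis on the zeros of $f'$ must do its work, is missing.
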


Next we recall the following definitions.

\begin{defi} (\cite{5}) Let $p$ be a positive integer and $a\in\mathbb{C}\cup\{\infty\}$.\begin{enumerate}
\item[(i)] $N(r,a;f\mid \geq p)$ ($\ol N(r,a;f\mid \geq p)$)denotes the counting function (reduced counting function) of those $a$-points of $f$ whose multiplicities are not less than $p$.\item[(ii)]$N(r,a;f\mid \leq p)$ ($\ol N(r,a;f\mid \leq p)$)denotes the counting function (reduced counting function) of those $a$-points of $f$ whose multiplicities are not greater than $p$.\end{enumerate} \end{defi}
\begin{defi} (\cite{10}) For $a\in\mathbb{C}\cup\{\infty\}$ and a positive integer $p$ we denote by $N_{p}(r,a;f)$ the sum $\ol N(r,a;f)+\ol N(r,a;f\mid \geq 2)+\ldots+\ol N(r,a;f\mid \geq p)$. Clearly $N_{1}(r,a;f)=\ol N(r,a;f)$. \end{defi}

\begin{defi} (\cite{10}) For $a \in \mathbb{C}\cup\{\infty\}$ and a positive integer p we put\\
$$\delta_{p}(a,f)=1-\limsup\limits_{r \to \infty} \frac{{N}_{p}(r,a;f)}{T(r,f)}.$$\\ Clearly $0\leq \delta(a,f) \leq \delta_{p}(a,f) \leq \delta_{p-1}(a,f) \leq...\leq \delta_{2}(a,f) \leq \delta_{1}(a,f)=\Theta(a,f)\leq1$ .\end{defi}
\begin{defi} For two positive integers $n$, $p$ we define \\
$\mu_{p}= min\{n,p\}$ and $\mu_{p}^{*}= p+1-\mu_{p}$. Then clearly $$ N_{p}(r,0;f^{n}) \leq \mu_{p}N_{\mu_{p}^{*}}(r,0;f).$$
\end{defi}
\begin{defi} (\cite{1}) Let $z_{0}$ be a zero of $f-a$ of multiplicity $p$ and a zero of $g-a$ of multiplicity $q$. We denote by $\ol N_{L}(r,a;f)$ the counting function of those $a$-points of $f$ and $g$ where $p>q\geq 1$, by $N^{1)}_{E}(r,a;f)$ the counting function of those $a$-points of $f$ and $g$ where $p=q=1$ and by $\ol N^{(2}_{E}(r,a;f)$ the counting function of those $a$-points of $f$ and $g$ where $p=q\geq 2$, each point in these counting functions is counted only once. In the same way we can define $\ol N_{L}(r,a;g),\; N^{1)}_{E}(r,a;g),\; \ol N^{(2}_{E}(r,a;g).$\end{defi}

\begin{defi} (\cite{4a}) Let $k$ be a nonnegative integer or infinity. For $a\in\mathbb{C}\cup\{\infty\}$ we denote by $E_{k}(a;f)$ the set of all $a$-points of $f$, where an $a$-point of multiplicity $m$ is counted $m$ times if $m\leq k$ and $k+1$ times if $m>k$. If $E_{k}(a;f)=E_{k}(a;g)$, we say that $f,g$ share the value $a$ with weight $k$.\end{defi}
The definition implies that if $f$, $g$ share a value $a$ with weight $k$ then $z_{0}$ is an $a$-point of $f$ with multiplicity $m\;(\leq k)$ if and only if it is an $a$-point of $g$ with multiplicity $m\;(\leq k)$ and $z_{0}$ is an $a$-point of $f$ with multiplicity $m\;(>k)$ if and only if it is an $a$-point of $g$ with multiplicity $n\;(>k)$, where $m$ is not necessarily equal to $n$.

We write $f$, $g$ share $(a,k)$ to mean that $f$, $g$ share the value $a$ with weight $k$. Clearly if $f$, $g$ share $(a,k)$, then $f$, $g$ share $(a,p)$ for any integer $p$, $0\leq p<k$. Also we note that $f$, $g$ share a value $a$ IM or CM if and only if $f$, $g$ share $(a,0)$ or $(a,\infty)$ respectively.\par
With the notion of weighted sharing of values Lahiri-Sarkar (\cite{5}) improved the result of Zhang (\cite{9}). In (\cite{10}) Zhang extended the result of Lahiri-Sarkar (\cite{5}) and replaced the concept of value sharing by small function sharing.\par
In 2008 Zhang and L\"{u}(\cite{11}) considered the uniqueness of the $n-$th power of a meromorphic function sharing a small function with its $k-$ th derivative and proved  the following theorem.
\begin{theoC}(\cite{11}) Let $ k(\geq 1)$, $n(\geq 1)$ be integers and $f$ be a non-constant meromorphic function. Also let $a(z) (\not\equiv 0,\infty )$ be a small function with respect to $f$. Suppose $f^{n}-a$ and $f^{(k)}-a$ share $(0,l)$. If $l=\infty$ and \par
\be \label {e1.1}(3+k)\Theta(\infty,f)+2\Theta(0,f)+\delta_{2+k}(0,f) > 6+k-n \ee\\
or $l=0$ and \par
\be \label {e1.2}(6+2k)\Theta(\infty,f)+4\Theta(0,f)+2\delta_{2+k}(0,f) > 12+2k-n \ee\\
then $f^{n}$ $\equiv$ $f^{(k)}$ .\end{theoC}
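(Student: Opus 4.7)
The plan is to follow the now standard machinery for this type of value-sharing problem, adapted for the small function $a(z)$ via the substitution $F=f^{n}/a$ and $G=f^{(k)}/a$, so that the hypothesis translates into the statement that $F$ and $G$ share $(1,l)$ up to a small function. Since $a\not\equiv 0,\infty$, one has $T(r,F)=nT(r,f)+S(r,f)$ and $T(r,G)=T(r,f^{(k)})+S(r,f)$, and the usual lemma on the logarithmic derivative together with $N(r,\infty;f^{(k)})=N(r,\infty;f)+k\ol N(r,\infty;f)$ converts everything back to Nevanlinna functionals of $f$.

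The core of the argument is to introduce the Lahiri differential polynomial
\be\label{Hplan}
H=\Bigl(\frac{F''}{F'}-\frac{2F'}{F-1}\Bigr)-\Bigl(\frac{G''}{G'}-\frac{2G'}{G-1}\Bigr),
\ee
and split into two cases. If $H\equiv 0$, integrating twice yields a M\"obius relation $(F-1)^{-1}=A(G-1)^{-1}+B$ with $A,B$ small functions; a short case analysis on $(A,B)$ using the deficiency hypotheses (\ref{e1.1}) resp. (\ref{e1.2}) rules out every possibility except $F\equiv G$, whence $f^{n}\equiv f^{(k)}$. If $H\not\equiv 0$, then the Lahiri inequality gives
$\ol N_{E}^{1)}(r,1;F)\le N(r,\infty;H)+S(r,f)$, and the pole structure of $H$ is controlled by $\ol N(r,0;F\mid\ge 2)$, $\ol N(r,0;G\mid\ge 2)$, $\ol N(r,\infty;F\mid\ge 2)$, $\ol N(r,\infty;G\mid\ge 2)$, and (only when $l=0$) the extra terms $\ol N_{L}(r,1;F)+\ol N_{L}(r,1;G)$.

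I would then apply the second fundamental theorem to $F$ and to $G$ with the three small targets $0,1,\infty$,
\bea
T(r,F)+T(r,G)&\le&\ol N(r,0;F)+\ol N(r,\infty;F)+\ol N(r,1;F)\nonumber\\
&&+\ol N(r,0;G)+\ol N(r,\infty;G)+\ol N(r,1;G)\nonumber\\
&&-N_{0}(r,0;F')-N_{0}(r,0;G')+S(r,f),\nonumber
\eea
and use the shared-value bookkeeping $\ol N(r,1;F)+\ol N(r,1;G)\le\ol N_{E}^{1)}+\ol N_{E}^{(2}+\ol N_{L}(r,1;F)+\ol N_{L}(r,1;G)+N(r,1;F\mid\ge 2)$ to feed in the $H$-estimate. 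The zero-counting terms $\ol N(r,0;F)$, $\ol N(r,0;G)$ are bounded by $N_{2+k}(r,0;f^{n})/n$-type quantities through Definition 1.4 (using $N_{p}(r,0;f^{n})\le\mu_{p}N_{\mu_{p}^{*}}(r,0;f)$), and the defects $\Theta(\infty,f)$, $\Theta(0,f)$, $\delta_{2+k}(0,f)$ enter precisely when converting the counting functions to $T(r,f)$ using $\ol N(r,a;g)\le(1-\Theta(a,g)+\varepsilon)T(r,g)$.

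The decisive step is choosing the estimate tight enough that the coefficients in front of $\Theta(\infty,f)$, $\Theta(0,f)$, $\delta_{2+k}(0,f)$ in the resulting inequality match $(3+k,2,1)$ in the CM case and $(6+2k,4,2)$ in the IM case. Concretely, each occurrence of a pole contributes $\Theta(\infty,f)$ weighted by how many times it appears among $N(r,\infty;F)$, $N(r,\infty;G)$, $N(r,\infty;H)$, with an extra factor $2$ in the IM case coming from the $\ol N_{L}$-terms; similarly for the zero contributions. After collecting terms and using $T(r,f^{(k)})\le(k+1)T(r,f)-k\ol N(r,\infty;f)+S(r,f)$ to eliminate $T(r,G)$ in favour of $T(r,f)$, the inequality becomes
\[
\bigl(\text{coefficient of }T(r,f)\bigr)\cdot T(r,f)\le\bigl(\text{something}\bigr)T(r,f)+S(r,f),
\]
which contradicts (\ref{e1.1}) or (\ref{e1.2}) unless $H\equiv 0$, completing the proof.

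The main obstacle will be the careful bookkeeping of the reduced counting functions, especially in the $l=0$ IM case where $\ol N_{L}(r,1;F)$ and $\ol N_{L}(r,1;G)$ produce the doubling of all coefficients; equally delicate is the $H\equiv 0$ subcase analysis, where one must exploit $\Theta(\infty,f)>0$ and $\delta_{2+k}(0,f)>0$ (implicit in (\ref{e1.1})–(\ref{e1.2})) to discard the cases $B\ne 0$ and $A\ne 1$, so that the M\"obius relation collapses to $F\equiv G$ and hence $f^{n}\equiv f^{(k)}$.
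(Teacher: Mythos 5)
Your overall architecture coincides with the paper's: the paper never proves Theorem C directly (it is quoted from Zhang--L\"{u} \cite{11}), but it is subsumed by Theorem \ref{t1} on taking $M[f]=f^{(k)}$ (so $d_{M}=1$, $\Gamma_{M}=k+1$, $\lambda=k$, with Lemma \ref{l1.1} handling the $n=1$ comparison), and the Section 3 proof of Theorem \ref{t1} is exactly your scheme: $F=f^{n}/a$, $G$ the derivative expression over $a$, the auxiliary function $H$, second fundamental theorem plus a bound on $N(r,\infty;H)$ when $H\not\equiv 0$, and integration of $H\equiv 0$ to a M\"{o}bius relation. However, three concrete steps in your sketch are wrong as stated. (i) You claim the poles of $H$ at shared poles are controlled by $\ol N(r,\infty;F\mid\geq 2)+\ol N(r,\infty;G\mid\geq 2)$. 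That cancellation of simple poles is valid only when $F$ and $G$ take their poles with \emph{equal} multiplicities; here a pole of $f$ of order $p$ is a pole of $F$ of order $np$ and of $G$ of order $p+k$, so generically \emph{every} pole of $f$ is a simple pole of $H$. The correct bound (the paper's Lemma \ref{l11}) carries the full term $\ol N(r,\infty;F)$, and this is precisely where one unit of the coefficient $3+k$ in (\ref{e1.1}) comes from; your undercount would produce $2+k$, a condition strictly stronger than the theorem and not what the computation actually yields. (ii) The inequality $T(r,f^{(k)})\leq (k+1)T(r,f)-k\ol N(r,\infty;f)+S(r,f)$ is false: for $f=\tan z$ and $k=1$ one has $f'=1+f^{2}$, so $T(r,f')=2T(r,f)+O(1)$, while your right-hand side is asymptotically $T(r,f)$ since $\ol N(r,\infty;f)\sim T(r,f)$. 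The correct statement is $T(r,f^{(k)})\leq T(r,f)+k\ol N(r,\infty;f)+S(r,f)$ --- but no such elimination is needed at all: as in the paper's Subcases 1.1--1.2, one uses $\ol N(r,1;G)\leq N(r,1;G)\leq T(r,G)+O(1)$ so that $T(r,G)$ cancels from the two sides of the second-fundamental-theorem inequality, leaving a bound on $T(r,F)=nT(r,f)+S(r,f)$.

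(iii) In the case $H\equiv 0$, the quantities $A\,(\neq 0)$ and $B$ obtained by integrating twice are complex \emph{constants}, not small functions; more importantly, your plan to discard the bad $(A,B)$-subcases using $\Theta(\infty,f)>0$ and $\delta_{2+k}(0,f)>0$ ``implicit'' in (\ref{e1.1})--(\ref{e1.2}) does not work, because these positivity statements are not implied by the hypotheses: for $n>6+k$ condition (\ref{e1.1}) holds even when every deficiency vanishes, so your subcase analysis would fail in the large-$n$ regime. The standard argument needs no positivity: when $B\neq 0$ the M\"{o}bius relation shows a pole of $G$ cannot be a pole of $F$, whereas $F$ and $G$ share the poles of $f$ away from the zeros and poles of $a$, forcing $\ol N(r,\infty;f)=S(r,f)$, after which the second fundamental theorem and the hypothesis give the contradiction; and the residual possibility $FG\equiv 1$ must be excluded by a separate argument --- in the paper this is Lemma \ref{l10}, where $FG\equiv 1$ forces $(n+1)T(r,f)\leq N(r,0;f)+k\ol N(r,\infty;f)+S(r,f)=S(r,f)$ because $f^{n}f^{(k)}=a^{2}$ confines the zeros and poles of $f$ to those of $a$. (The paper's actual Case 2 routes this through the trichotomy of Lemma \ref{l14} plus Lemma \ref{l10} rather than an explicit $(A,B)$ case split, but your classical route is fine once repaired as above.)
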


In the same paper Zhang and L\"{u} (\cite{11}) raised the following question :\\
What will happen if  $f^{n}$ and $[f^{(k)}]^{s}$ share a small function ?\par
In  2010, Chen and Zhang (\cite{2}) gave a answer to the above question. Unfortunately there were some gaps in the proof of the theorems in (\cite{2}) which was latter rectified by Banerjee and Majumder (\cite{1}).
\par In  2010 Banerjee and Majumder (\cite{1}) proved two theorems one of which further improved {\it Theorem C} whereas the other answers the open question of Zhang and L\"{u} (\cite{11}) in the following manner.
\begin{theoD} (\cite{1}) Let $ k(\geq 1)$, $n(\geq 1)$ be integers and $f$ be a non-constant meromorphic function. Also let $a(z) (\not\equiv 0,\infty )$ be a small function with respect to $f$. Suppose $f^{n}-a$ and $f^{(k)}-a$ share $(0,l)$. If $l\geq 2$ and \par
\be \label {e1.3} (3+k)\Theta(\infty,f)+2\Theta(0,f)+\delta_{2+k}(0,f) > 6+k-n \ee\\
or $l=1$ and \par
\be \label {e1.4} \left(\frac{7}{2}+k\right)\Theta(\infty,f)+\frac{5}{2}\Theta(0,f)+\delta_{2+k}(0,f) > 7+k-n \ee\\
or $l=0$ and \par
\be \label {e1.5}(6+2k)\Theta(\infty,f)+4\Theta(0,f)+\delta_{2+k}(0,f)+\delta_{1+k}(0,f) > 12+2k-n \ee\\
then $f^{n}=f^{(k)}$ .\end{theoD}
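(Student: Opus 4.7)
The plan is to adapt the weighted-sharing machinery of Lahiri to this new setting with a small function and then optimize the auxiliary counting estimates to cover all three ranges of $l$ in a unified framework.

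First I would introduce the auxiliary functions $F=f^{n}/a$ and $G=f^{(k)}/a$. Because $a$ is a small function with respect to $f$, one has $T(r,F)=nT(r,f)+S(r,f)$ and $T(r,G)=T(r,f^{(k)})+S(r,f)$, and the hypothesis translates into the statement that $F$ and $G$ share $(1,l)$. The target conclusion $f^{n}\equiv f^{(k)}$ becomes $F\equiv G$, so the strategy is to assume $F\not\equiv G$ and drive a contradiction from the inequality hypothesized on the deficiencies and ramification indices.

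The central tool is the standard Lahiri difference operator
\begin{equation*}
H=\left(\frac{F''}{F'}-\frac{2F'}{F-1}\right)-\left(\frac{G''}{G'}-\frac{2G'}{G-1}\right),
\end{equation*}
which satisfies $m(r,H)=S(r,f)$ by the lemma of the logarithmic derivative. I would split the argument into the case $H\not\equiv 0$ and the case $H\equiv 0$. In the first case, a careful pole analysis of $H$ together with the observation that the common simple $1$-points of $F$ and $G$ are zeros of $H$ gives an upper bound for $N_{E}^{1)}(r,1;F)$ in terms of $\overline{N}(r,0;F')$, $\overline{N}(r,0;G')$, $\overline{N}(r,\infty;f)$ and the extra counting functions $\overline{N}_{L}(r,1;F)$, $\overline{N}_{L}(r,1;G)$, $\overline{N}_{E}^{(2}(r,1;F)$ that distinguish the three weight regimes $l\geq 2$, $l=1$, $l=0$. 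Plugging this into the second fundamental theorem applied simultaneously to $F$ and $G$ with the three targets $0,1,\infty$, and then invoking the Milloux-type inequality
\begin{equation*}
N_{k+2}(r,0;f^{(k)})\leq T(r,f^{(k)})-T(r,f)+N_{k+2}(r,0;f)+S(r,f),
\end{equation*}
together with the bookkeeping inequality $N_{p}(r,0;f^{n})\leq \mu_{p}N_{\mu_{p}^{*}}(r,0;f)$ built into Definition~1.4, yields a lower bound of the form $[\,\text{coefficient}\,]\,T(r,f)\leq S(r,f)$. Choosing the coefficient to be positive is precisely the condition displayed in each of (1.3)–(1.5), so the assumption $F\not\equiv G$ is untenable.

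In the residual case $H\equiv 0$, a double integration produces a M\"obius relation $1/(F-1)=A/(G-1)+B$ with constants $A,B$. A standard sub-case analysis on the pair $(A,B)$ (namely $B\neq 0,A\neq B$; $B\neq 0,A=B$; $B=0,A=1$; $B=0,A\neq 1$) combined with the deficiency hypothesis rules out every possibility except $F\equiv G$, again using the Milloux inequality to compare $T(r,f^{n})$ and $T(r,f^{(k)})$ with $T(r,f)$. The main obstacle is the case $l=0$, where the extra unknowns $\overline{N}_{L}(r,1;F)$ and $\overline{N}_{L}(r,1;G)$ cannot be absorbed as cleanly; this is where the two deficiencies $\delta_{2+k}(0,f)$ and $\delta_{1+k}(0,f)$ are both needed in (1.5). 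The remaining delicate point is to distinguish the treatment of the three weight regimes so that the thresholds in (1.3), (1.4), (1.5) come out sharp; this requires keeping the $\overline{N}_{E}^{(2}$ terms tracked separately for $l=1$ and discarding them for $l\geq 2$.
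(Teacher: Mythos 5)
Your proposal is correct and follows essentially the same route as the paper: the paper establishes Theorem D as the special case $M[f]=f^{(k)}$ (so $d_{M}=1$, $\Gamma_{M}=k+1$, $\lambda=k$; the case $n=1$ is covered via Lemma 2.1) of its Theorem 1.1, whose proof is exactly your scheme — $F=f^{n}/a$, $G=M[f]/a$ sharing $(1,l)$, Lahiri's function $H$, the second fundamental theorem combined with the weighted-sharing counting lemmas (Lemmas 2.11--2.13) to separate the regimes $l\geq 2$, $l=1$, $l=0$, and, when $H\equiv 0$, the M\"obius relation $\frac{1}{G-1}=\frac{A}{F-1}+B$, with your explicit $(A,B)$ subcase analysis packaged instead as the trichotomy Lemma 2.14 plus Lemma 2.10 excluding $FG\equiv 1$. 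One index slip to correct: your Milloux-type inequality should read $N_{2}(r,0;f^{(k)})\leq T(r,f^{(k)})-T(r,f)+N_{2+k}(r,0;f)+S(r,f)$ — as written, with $N_{k+2}(r,0;f^{(k)})$ on the left and $N_{k+2}(r,0;f)$ on the right, it is false (e.g.\ $f=\sin^{4}z$, $k=1$) — while the paper's equivalent bookkeeping is Lemma 2.8, $N_{p}(r,0;f^{(k)})\leq N_{p+k}(r,0;f)+k\overline{N}(r,\infty;f)+S(r,f)$, which yields the same thresholds.
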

\begin{theoE} (\cite{1}) Let $k(\geq 1)$, $n(\geq 1)$, $m(\geq 2)$ be integers and $f$ be a non-constant meromorphic function.
Also let $a(z)(\not\equiv 0,\infty)$ be a small function with respect to $f$. Suppose $f^{n}-a$ and $[f^{(k)}]^{m}-a$ share $(0,l)$.
If $l=2$ and \be\label{e1.3a}(3+2k)\;\Theta (\infty,f)+2\;\Theta (0,f)+2\delta_{1+k}(0,f)> 7+2k-n\ee or
 $l=1$ and \be\label{e1.4a}\left(\frac{7}{2}+2k\right)\;\Theta (\infty,f)+\frac{5}{2}\;\Theta (0,f)+2\delta_{1+k}(0,f)> 8+2k-n\ee
or $l=0$ and \be\label{e1.5a}(6+3k)\;\Theta (\infty,f)+4\;\Theta (0,f)+3\delta_{1+k}(0,f)> 13+3k-n\ee then $f^{n}\equiv [f^{(k)}]^{m}$.\end{theoE}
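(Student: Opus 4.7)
The plan is to mirror the strategy used for Theorem D while carrying the extra exponent $m\geq 2$ through every Nevanlinna estimate. The first step is normalization: set $\mathcal F = f^{n}/a$ and $\mathcal G = [f^{(k)}]^{m}/a$, so that the hypothesis becomes that $\mathcal F$ and $\mathcal G$ share $(1,l)$, and the target conclusion $f^{n}\equiv [f^{(k)}]^{m}$ translates to $\mathcal F\equiv \mathcal G$. Assuming towards contradiction that $\mathcal F\not\equiv \mathcal G$, I form the usual auxiliary function
\[
H = \left(\frac{\mathcal F''}{\mathcal F'} - \frac{2\mathcal F'}{\mathcal F-1}\right) - \left(\frac{\mathcal G''}{\mathcal G'} - \frac{2\mathcal G'}{\mathcal G-1}\right),
\]
and split the argument into the cases $H\not\equiv 0$ and $H\equiv 0$.

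In the generic case $H\not\equiv 0$, I apply the second fundamental theorem to $\mathcal F$ and $\mathcal G$ at $0, 1, \infty$, and bound the overlap counting functions $\overline N_{L}(r,1;\cdot)$, $N^{1)}_{E}(r,1;\cdot)$, $\overline N^{(2}_{E}(r,1;\cdot)$ via the standard weighted-sharing lemmas, branching on whether $l=2$, $l=1$, or $l=0$. The place where the exponent $m$ truly earns its keep is in controlling $N_{1}$ and $N_{2}$ of $\mathcal G$ at $0$: since a zero of $f^{(k)}$ of multiplicity $q$ is a zero of $[f^{(k)}]^{m}$ of multiplicity $mq\geq 2q$, Definition~1.4 with $p=2$ (so that $\mu_{2}=2$ and $\mu_{2}^{\ast}=1$) yields $N_{2}(r,0;[f^{(k)}]^{m})\leq 2\,\overline N(r,0;f^{(k)})$, and combining with the Milloux-type estimate $\overline N(r,0;f^{(k)})\leq N_{1+k}(r,0;f)+k\,\overline N(r,\infty;f)+S(r,f)$ inserts $\delta_{1+k}(0,f)$---rather than $\delta_{2+k}(0,f)$ as in Theorem D---together with additional $\Theta(\infty,f)$-contributions. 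Combined with $T(r,\mathcal G)\leq m(k+1)T(r,f)+S(r,f)$ and the sharing hypothesis, these bounds should force a violation of \eqref{e1.3a}, \eqref{e1.4a}, or \eqref{e1.5a} according to the value of $l$.

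In the degenerate case $H\equiv 0$, integrating twice yields $\tfrac{1}{\mathcal F-1}=\tfrac{A}{\mathcal G-1}+B$ for constants $A, B$, so $\mathcal F$ and $\mathcal G$ are M\"{o}bius-related. A standard sub-case analysis on $(A,B)$---whether $B=0$, whether $A+B=1$, and so on---together with the deficiency hypotheses on $\Theta(0,f)$, $\Theta(\infty,f)$, and $\delta_{1+k}(0,f)$, should exclude every alternative except $\mathcal F\equiv \mathcal G$, which is the desired identity.

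The chief obstacle will be the bookkeeping in the $l=0$ regime, where the weakest sharing produces the largest $\overline N_{L}$-overlap contributions, and only the reinforced coefficient $3\,\delta_{1+k}(0,f)$ appearing in \eqref{e1.5a} is large enough to absorb them while keeping the inequality strict. A uniform, careful use of $N_{p}(r,0;f^{n})\leq \mu_{p}\,N_{\mu_{p}^{\ast}}(r,0;f)$ from Definition~1.4 is precisely what replaces the $\delta_{2+k}$-terms of Theorem D with the sharper $\delta_{1+k}$-terms here, so tracking $\mu_{p}$ against $\mu_{p}^{\ast}$ consistently across the three sub-cases is the most delicate accounting in the argument.
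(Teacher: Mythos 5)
The paper states Theorem E only as a quotation from Banerjee--Majumder \cite{1} and contains no proof of it, so the natural benchmark is the method the paper itself uses for Theorem \ref{t1}; your proposal follows essentially that same route (normalize to $F=f^{n}/a$, $G=[f^{(k)}]^{m}/a$ sharing $(1,l)$, split on $H\not\equiv 0$ versus $H\equiv 0$, run the second fundamental theorem with the weighted-sharing counting lemmas, and settle the M\"obius case $\frac{1}{F-1}=\frac{A}{G-1}+B$, where the subcase $FG\equiv 1$, i.e.\ $f^{n}[f^{(k)}]^{m}\equiv a^{2}$, is excluded by the standard small-function argument as in Lemma \ref{l10}). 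In particular, your key observation that every zero of $[f^{(k)}]^{m}$ has multiplicity at least $m\geq 2$, whence $N_{2}(r,0;[f^{(k)}]^{m})\leq 2\overline{N}(r,0;f^{(k)})\leq 2N_{1+k}(r,0;f)+2k\overline{N}(r,\infty;f)+S(r,f)$, is precisely what makes $m$ drop out of the hypotheses and replaces the $\delta_{2+k}$-terms of Theorem D by $\delta_{1+k}$-terms, and it does reproduce the stated coefficients in all three cases $l=2$, $l=1$, $l=0$.
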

For $m=1$ it can be easily proved that {\it Theorem D} is a better result than {\it Theorem E}. Also we observe that in the conditions (\ref{e1.3a})-(\ref{e1.5a}) there was no influence of $m$. \par
Very recently, in order to improve the results of Zhang (\cite{10}), Li-Huang (\cite{5a}) obtained the following theorem. In view of {\it Lemma \ref{l1.1}} proved latter on, we see that the following result obtained in (\cite{5a}) is better than that of {\it Theorem D} for $n=1$.
\begin{theoF}(\cite{5a}) Let $f$ be a non-constant meromorphic function, $ k(\geq 1)$, $l(\geq 0)$ be be integers and also let $a(z) (\not\equiv 0,\infty )$ be a small function with respect to $f$. Suppose $f-a$ and $f^{(k)}-a$ share $(0,l)$. If $l\geq 2$ and \par
\be \label {e1.6} (3+k)\Theta(\infty,f)+\delta_{2}(0,f)+\delta_{2+k}(0,f) > k+4\ee\\
or $l=1$ and \par
\be \label {e1.7} \left(\frac{7}{2}+k\right)\Theta(\infty,f)+\frac{1}{2}\Theta(0,f)+\delta_{2}(0,f)+\delta_{2+k}(0,f) > k+5 \ee\\
or $l=0$ and \par
\be \label {e1.8} (6+2k)\Theta(\infty,f)+2\Theta(0,f)+\delta_{2}(0,f)+\delta_{1+k}(0,f)+\delta_{2+k}(0,f) > 2k+10 \ee\\
then $f \equiv f^{(k)}$.\end{theoF}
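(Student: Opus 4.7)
\medskip
\noindent\textbf{Proof proposal.} The plan is to follow the auxiliary-function technique used in the Zhang--L\"u circle of ideas, specialized to the case $n=1$ where the deficiency $\delta_{2}(0,f)$ will naturally enter. Normalize by setting $F=f/a$ and $G=f^{(k)}/a$; since $a$ is a small function, the sharing hypothesis that $f-a$ and $f^{(k)}-a$ share $(0,l)$ translates into $F$ and $G$ sharing $(1,l)$ up to $S(r,f)$-terms. Introduce the auxiliary function
\begin{equation*}
\Phi=\left(\frac{F''}{F'}-\frac{2F'}{F-1}\right)-\left(\frac{G''}{G'}-\frac{2G'}{G-1}\right),
\end{equation*}
so that the lemma on the logarithmic derivative gives $m(r,\Phi)=S(r,f)$. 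The whole argument will split according as $\Phi\not\equiv 0$ or $\Phi\equiv 0$.

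\smallskip
\noindent\emph{Case 1: $\Phi\not\equiv 0$.} A local computation shows that $\Phi$ is holomorphic at every simple common $1$-point of $F$ and $G$, and also at every common $1$-point of equal multiplicity $\geq 2$; the remaining poles are simple and confined to (a) poles of $F$ and $G$, (b) zeros of $F'$ (respectively $G'$) that are not $1$-points of $F$ (respectively $G$), and (c) the ``unequal'' $1$-points captured by the $\ol N_L$-terms, plus $S(r,f)$. Writing
\begin{equation*}
N^{1)}_E(r,1;F)\leq N(r,0;\Phi)\leq N(r,\Phi)+S(r,f),
\end{equation*}
and combining with Nevanlinna's second main theorem applied to $F$ and to $G$, where the $0$-points of $G$ are controlled via the Milloux-type bound $\ol N(r,0;f^{(k)})\leq N_{k+1}(r,0;f)+k\ol N(r,\infty;f)+S(r,f)$ and $T(r,f^{(k)})\leq T(r,f)+k\ol N(r,\infty;f)+S(r,f)$, yields an inequality whose left-hand side is essentially $(T(r,f)+T(r,f^{(k)}))$ and whose right-hand side is a linear combination of $N_{2}$- and $N_{p+k}$-quantities at $0$ together with $\ol N$ at $\infty$. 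The three subcases correspond to how much of the $\ol N_L$-mass survives: for $l\geq 2$ both $\ol N_L$-terms are absorbed completely, for $l=1$ a factor of $\tfrac12$ remains (accounting for the $\tfrac52\Theta(0,f)+\tfrac72\Theta(\infty,f)$ shape), and for $l=0$ both terms contribute fully, forcing the appearance of both $\delta_{1+k}(0,f)$ and $\delta_{2+k}(0,f)$ in \eqref{e1.8}. In each subcase the resulting inequality directly contradicts the assumed lower bound \eqref{e1.6}, \eqref{e1.7}, or \eqref{e1.8}.

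\smallskip
\noindent\emph{Case 2: $\Phi\equiv 0$.} Integrating twice produces
\begin{equation*}
\frac{1}{F-1}=\frac{A}{G-1}+B,\qquad A\in\mathbb{C}\setminus\{0\},\ B\in\mathbb{C}.
\end{equation*}
If $B\neq 0$, solving for $G$ in terms of $F$ shows that every pole of $f$ forces $G$ to take the finite value $1-A/B$, so $\Theta(1-A/B,G)$ together with the Milloux estimate relating $T(r,f^{(k)})$ and $T(r,f)$ violates the $\infty$-deficiency hypothesis on $f$. If $B=0$ and $A\neq 1$, then $F$ omits the value $1$ or $1-A^{-1}$ in a manner incompatible with the zero/$1$-point balance forced by the hypotheses on $\Theta(0,f)$ and the $\delta_p$'s. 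Hence $A=1$, $B=0$, which gives $F\equiv G$ and therefore $f\equiv f^{(k)}$.

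\smallskip
\noindent\emph{Main obstacle.} The delicate part is the bookkeeping in Case 1 when $l=0$: one must estimate $\ol N_L(r,1;F)$ and $\ol N_L(r,1;G)$ separately and then trade them against the counting functions $N_{k+1}(r,0;f)$ and $N_{k+2}(r,0;f)$ without any overcounting, so that the final inequality exactly matches the coefficient structure of \eqref{e1.8}. This is precisely where Lemma 1.1 referenced in the introduction is indispensable, since it supplies the comparison between the $N_{2}$- and $\delta_{2}$-type expressions that lets the $n=1$ specialization produce a genuinely stronger conclusion than the $n=1$ instance of Theorem D.
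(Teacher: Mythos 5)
Your overall architecture does match the paper's. Theorem F is only quoted in the paper from Li--Huang, but it is exactly the case $n=1$, $M[f]=f^{(k)}$ (so $d_{M}=1$, $\Gamma_{M}=k+1$, $\lambda=k$, $\mu_{2}=1$, $\mu_{2}^{*}=2$) of Theorem \ref{t1}, and your Case 1 is precisely the paper's Case 1: the same $F$, $G$, the same auxiliary function $H$ (your $\Phi$), and the second main theorem combined with Lemmas \ref{l11}, \ref{l12}, \ref{l13} and the Milloux-type bound of Lemma \ref{l8}, with the three weights handled exactly as in Subsubcases 1.1.1, 1.1.2 and Subcase 1.2. One correction there: Lemma \ref{l1.1} plays no role in that bookkeeping; in the paper it is used only to show that Theorem F is stronger than the $n=1$ instance of Theorem D, so your closing claim that it is ``indispensable'' to the $l=0$ estimate is a misattribution --- the $\delta_{2}(0,f)$ term enters simply because $N_{2}(r,0;F)=N_{2}(r,0;f)+S(r,f)$ when $n=1$.

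The genuine gap is in your Case 2, branch $B\neq 0$. Poles of $f$ (avoiding zeros and poles of $a$) are \emph{common} poles of $F$ and $G$, while your relation forces $G=1-A/B$, a finite value, at poles of $F$; the correct conclusion is therefore that such poles do not exist, i.e. $\overline{N}(r,\infty;f)=S(r,f)$, so $\Theta(\infty,f)=1$. This does not ``violate the $\infty$-deficiency hypothesis'' --- on the contrary, it makes \eqref{e1.6}--\eqref{e1.8} easier to satisfy --- so at this point no contradiction has been reached, and a further second-main-theorem argument is still required: for $B\neq -1$, $F$ takes the value $1+\frac{1}{B}$ exactly at poles of $G$, whence $\overline{N}(r,1+\frac{1}{B};F)=S(r,f)$, and the second main theorem for $F$ with the values $0$, $1+\frac{1}{B}$, $\infty$ gives $T(r,f)\leq \overline{N}(r,0;f)+S(r,f)$, contradicting $\Theta(0,f)>\frac{1}{2}$, which the hypotheses do force (since $\delta_{2+k}(0,f)\leq \delta_{2}(0,f)\leq \Theta(0,f)$ and each of \eqref{e1.6}--\eqref{e1.8}, with $\Theta(\infty,f)\leq 1$, yields $\delta_{2}(0,f)+\delta_{2+k}(0,f)>1$). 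Worse, the subcase $A=B=-1$ gives exactly $FG\equiv 1$, i.e. $ff^{(k)}\equiv a^{2}$; there $f$ has essentially no zeros or poles, your pole argument is vacuous, and this possibility escapes your trichotomy altogether --- it must be excluded separately, as the paper does via Lemma \ref{l10} (which rests on Lemma \ref{l5} and $m(r,M/f^{d_{M}})=S(r,f)$). The paper sidesteps the whole $A,B$ case analysis: from $H\equiv 0$ it concludes that $F,G$ share $(1,\infty)$, notes they share $(\infty,0)$ by construction, invokes the trichotomy of Lemma \ref{l14}, rules out alternative (1) by the displayed deficiency computation and alternative (3) by Lemma \ref{l10}, leaving $F\equiv G$. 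Either adopt that route, or complete your $B\neq 0$ and $B=0$, $A\neq 1$ branches along the lines sketched above, including the explicit exclusion of $ff^{(k)}\equiv a^{2}$.
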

Next we recall the following definition.
\begin{defi} (\cite{4}) Let $n_{0j},n_{1j},\ldots,n_{kj}$ be nonnegative integers.\\
The expression $M_{j}[f]=(f)^{n_{0j}}(f^{(1)})^{n_{1j}}\ldots(f^{(k)})^{n_{kj}}$ is called a differential monomial generated by $f$ of degree $d_{M_{j}}=d(M_{j})=\sum\limits_{i=0}^{k}n_{ij}$ and weight
$\Gamma_{M_{j}}=\sum\limits_{i=0}^{k}(i+1)n_{ij}$.

The sum $P[f]=\sum\limits_{j=1}^{t}b_{j}M_{j}[f]$ is called a differential polynomial generated by $f$ of degree $\ol{d}(P)=max\{d(M_{j}):1\leq j\leq t\}$
and weight $\Gamma_{P}=max\{\Gamma_{M_{j}}:1\leq j\leq t\}$, where $T(r,b_{j})=S(r,f)$ for $j=1,2,\ldots,t$.

The numbers $\underline{d}(P)=min\{d(M_{j}):1\leq j\leq t\}$ and k (the highest order of the derivative of $f$ in $P[f]$) are called respectively the lower degree and order of $P[f]$.

$P[f]$ is said to be homogeneous if $\ol{d}(P)$=$\underline{d}(P)$.

$P[f]$ is called a linear differential polynomial generated by $f$ if $\ol {d} (P)=1$. Otherwise $P[f]$ is called a non-linear differential polynomial.\par
We denote by $Q=max\; \{\Gamma _{M_{j}}-d(M_{j}): 1\leq j\leq t\}=max\; \{ n_{1j}+2n_{2j}+\ldots+kn_{kj}: 1\leq j\leq t\}$.\par
Also for the sake of convenience for a differential monomial $M[f]$ we denote by  $\lambda =\Gamma_{M}-d_{M}$.
\end{defi}
Recently Charak-Lal (\cite{2a}) considered the possible extension of {\it Theorem D} in the direction of the question of Zhang and L\"{u} (\cite{11}) up to differential polynomial.\par They proved the following result :
\begin{theoG} (\cite{2a}) Let $f$ be a non-constant meromorphic function and $n$  be a positive integer and $a(z) (\not\equiv 0,\infty )$ be a meromorphic function satisfying $T(r,a)=o(T(r,f))$ as $r \to \infty$. Let $P[f]$ be a non-constant differential polynomial in $f$. Suppose $f^{n}$ and $P[f]$ share $(a,l)$. If $l\geq 2$ and \par
\be \label {e1.9}(3+Q)\Theta(\infty,f)+2\Theta(0,f)+\ol{d}(P)\delta(0,f) > Q+5+2\ol{d}(P)-\underline{d}(P)-n \ee\\
or $l=1$ and \par
\be \label {e1.10} \left(\frac{7}{2}+Q\right)\Theta(\infty,f)+\frac{5}{2}\Theta(0,f)+\ol{d}(P)\delta(0,f) > Q+6+2\ol{d}(P)-\underline{d}(P)-n \ee\\
or $l=0$ and \par
\be \label {e1.11} (6+2Q)\Theta(\infty,f)+4\Theta(0,f)+2\ol{d}(P)\delta(0,f) > 2Q+4\ol{d}(P)-2\underline{d}(P)+10-n \ee\\
then $f^{n} \equiv P[f]$.\end{theoG}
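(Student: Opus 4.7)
The plan is to mimic the Lahiri-type blueprint that underlies {\it Theorem D}: set $F=f^{n}/a$ and $G=P[f]/a$, so that the sharing hypothesis on $f^{n}-a$ and $P[f]-a$ becomes the sharing of $(1,l)$ by $F$ and $G$ modulo an $S(r,f)$-error (using that $a$ is small relative to $f$ and hence relative to $P[f]$). The central auxiliary object is
\beas
h=\left(\frac{F''}{F'}-\frac{2F'}{F-1}\right)-\left(\frac{G''}{G'}-\frac{2G'}{G-1}\right),
\eeas
and the argument splits according to whether $h\not\equiv 0$ or $h\equiv 0$.

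For $h\not\equiv 0$, a standard computation shows that every simple common $1$-point of $F$ and $G$ is a zero of $h$, so by the lemma on logarithmic derivatives $N_{E}^{1)}(r,1;F)\leq \ol N(r,\infty;h)+S(r,f)$, the poles of $h$ being controlled by multiple $1$-points, poles and zeros of $F,G$. Applying Nevanlinna's second main theorem to $F$ and $G$ at $0,1,\infty$, and partitioning the $1$-points according to the weight $l$ in the three standard ways (as in the three branches of {\it Theorem D}), one obtains a master inequality whose right-hand side involves $N_{2}(r,0;F)$, $N_{2}(r,0;G)$, $\ol N(r,\infty;F)$ and $\ol N(r,\infty;G)$ with coefficients depending only on $l$. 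The routine estimates $T(r,P[f])\leq \ol d(P)\,T(r,f)+S(r,f)$, $\ol N(r,\infty;P[f])\leq (1+Q)\,\ol N(r,\infty;f)+S(r,f)$, together with a sharp bound on $N_{2}(r,0;P[f])$ in terms of $N_{2}(r,0;f)$ and $\ol N(r,\infty;f)$, then convert the master inequality to one in $T(r,f)$ alone; the deficiency conditions (\ref{e1.9})-(\ref{e1.11}) are precisely what produces a strict inequality, hence a contradiction.

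Consequently $h\equiv 0$. Integrating twice yields $\frac{1}{F-1}=\frac{A}{G-1}+B$ with constants $A\neq 0$, $B\in\mathbb{C}$, and the subcases $(B\neq 0,A\neq B)$, $(B\neq 0,A=B)$ and $(B=0,A\neq 1)$ are eliminated one by one through second-main-theorem counts that again invoke the same deficiency hypotheses (the first subcase using the pole-count at $\infty$ for $F$ and $G$, the second using the $0$-count, the third using a ramification-type argument on $G$); the surviving subcase $A=1$, $B=0$ forces $F\equiv G$, i.e.\ $f^{n}\equiv P[f]$.

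The main obstacle I foresee is the bookkeeping in the first case: arranging that the coefficients $Q$, $\ol d(P)$ and $\underline d(P)$ generated by the estimates on $T(r,P[f])$, $\ol N(r,\infty;P[f])$ and especially $N_{2}(r,0;P[f])$ match \emph{exactly} the constants appearing on the right-hand sides of (\ref{e1.9})-(\ref{e1.11}). This is precisely where the passage from the monomial $f^{(k)}$ (treated in {\it Theorem D}) to a general differential polynomial manifests itself, and it will almost certainly require a preparatory lemma giving a sharp upper bound for $N_{2}(r,0;P[f])$ in terms of $N_{2}(r,0;f)$, $\ol N(r,\infty;f)$ and the invariants $Q$, $\ol d(P)$, $\underline d(P)$; once such a lemma is in hand the rest of the proof is a careful adaptation of the arguments in \cite{1}.
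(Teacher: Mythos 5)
First, a point of orientation: the paper itself contains no proof of Theorem G --- it is quoted from Charak--Lal \cite{2a} as background --- so your proposal can only be measured against the same machinery the paper deploys for its own Theorem \ref{t1}. Your overall architecture ($F=f^{n}/a$, $G=P[f]/a$, the auxiliary function $H$, the dichotomy $H\not\equiv 0$ versus $H\equiv 0$, the second fundamental theorem plus counting lemmas in the first case, integration in the second) is indeed the route taken in the paper's proof of Theorem \ref{t1} and, modulo packaging, in \cite{1} and \cite{2a}; in the $H\equiv 0$ case the paper prefers the packaged trichotomy of Lemma \ref{l14} together with Lemma \ref{l10} (excluding $FG\equiv 1$) instead of your classical $(A,B)$ subcase chase, but those two presentations are interchangeable.

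There is, however, a genuine gap at precisely the step you flag as the crux. The \enquote{preparatory lemma giving a sharp upper bound for $N_{2}(r,0;P[f])$ in terms of $N_{2}(r,0;f)$ and $\ol N(r,\infty;f)$} does not exist for a general differential polynomial: unlike a monomial (where the paper's Lemma \ref{l9} gives $N_{p}(r,0;M[f])\leq d_{M}N_{p+k}(r,0;f)+\lambda \ol N(r,\infty;f)+S(r,f)$), a sum $P[f]=\sum_{j} b_{j}M_{j}[f]$ can vanish by cancellation at points that are neither zeros nor poles of $f$, so its zeros cannot be localized, let alone counted with truncation. The mechanism that actually produces the constants in (\ref{e1.9})--(\ref{e1.11}) is the pair of \emph{untruncated} Milloux--Mokhon'ko-type estimates $T(r,P[f])\leq \ol{d}(P)T(r,f)+Q\,\ol N(r,\infty;f)+S(r,f)$ and $N(r,0;P[f])\leq T(r,P[f])-\underline{d}(P)T(r,f)+\ol{d}(P)N(r,0;f)+S(r,f)$ (polynomial analogues of the paper's Lemmas \ref{l1} and \ref{l2}), with $N_{2}(r,0;G)$ bounded crudely by $N(r,0;G)$; this is exactly why the untruncated deficiency $\delta(0,f)$ and the combination $2\ol{d}(P)-\underline{d}(P)$ appear in the hypotheses, and why --- as the paper explicitly remarks --- substituting $P[f]=f^{(k)}$ into Theorem G does not recover Theorem D. Had your truncated lemma been available, you would be proving a strictly stronger statement of Theorem-D type, which the paper shows is attainable only for monomials. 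A secondary soft spot: in your $H\equiv 0$ analysis the subcase $A=B$ (with $B=-1$) degenerates to $FG\equiv 1$, i.e.\ $f^{n}P[f]\equiv a^{2}$, which cannot be dismissed by second-main-theorem counts alone and requires a dedicated exclusion argument in the spirit of the paper's Lemma \ref{l10}.
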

This is a supplementary result corresponding to {\it Theorem D} because putting $P[f]=f^{(k)}$ one can't obtain {\it Theorem D}, rather in this case a set of stronger conditions are obtained  as particular case of {\it Theorem F}. So it is natural to ask the next question.
\begin {question} \label{q1}
\emph{Is it possible to improve {\it Theorem D} in the direction of {\it Theorem F} up to differential monomial so that the result give a positive answer to the question of Zhang and L\"{u} \cite{11} ?}\end{question}
To seek the possible answer of  {\it Question 1.1} is the motivation of the paper.\\
The following theorem is the main result of this paper which gives a positive answer of Zhang and L\"{u}(\cite{11}).
\begin{theo}\label{t1} Let $ k(\geq 1)$, $n(\geq 1)$ be integers and $f$ be a non-constant meromorphic function and $M[f]$ be a differential monomial of degree $d_{M}$ and weight $\Gamma_{M}$ and $k$ is the highest derivative in $M[f]$. Also let $a(z) (\not\equiv 0,\infty )$ be a small function with respect to $f$. Suppose $f^{n}-a$ and $M[f]-a$ share $(0,l)$. If $l\geq 2$ and \par
\be \label{e1.12} (3+\lambda)\Theta(\infty,f)+\mu_{2}\delta_{\mu_{2}^{*}}(0,f)+d_{M}\delta_{2+k}(0,f) > 3+\Gamma_{M}+\mu_{2}-n \ee\\
or $l=1$ and \par
\be\label {e1.13}  \left(\frac{7}{2}+\lambda\right)\Theta(\infty,f)+\frac{1}{2}\Theta(0,f)+\mu_{2}\delta_{\mu_{2}^{*}}(0,f)+d_{M}\delta_{2+k}(0,f) > 4+\Gamma_{M}+\mu_{2}-n \ee\\
or $l=0$ and \par
\be \label {e1.14} (6+2\lambda)\Theta(\infty,f)+2\Theta(0,f)+\mu_{2}\delta_{\mu_{2}^{*}}(0,f)+d_{M}\delta_{2+k}(0,f)+d_{M}\delta_{1+k}(0,f) > 8+2\Gamma_{M}+\mu_{2}-n \ee\\
then $f^{n} \equiv M[f]$ .
\end{theo}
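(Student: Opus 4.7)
My plan is to generalise the Lahiri-type auxiliary function technique that underlies Theorems D and F to the setting in which $f^{(k)}$ is replaced by an arbitrary differential monomial $M[f]$. Setting $F = f^n/a$ and $G = M[f]/a$, the hypothesis becomes that $F$ and $G$ share $(1,l)$. Before touching the auxiliary function, I would first establish the routine preliminary estimates $T(r,F) = n\,T(r,f) + S(r,f)$, $T(r,G) \leq d_{M}\,T(r,f) + \lambda\,\overline{N}(r,\infty;f) + S(r,f)$ and $\overline{N}(r,\infty;M[f]) \leq \overline{N}(r,\infty;f) + S(r,f)$, together with pointwise-multiplicity bounds such as $N_{p}(r,0;M[f]) \leq d_{M}\,N_{p+k}(r,0;f) + \lambda\,\overline{N}(r,\infty;f) + S(r,f)$. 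These monomial-specific estimates are precisely what will force the constants $\Gamma_{M}$, $\lambda$, $\mu_{2}$, $d_{M}$ to appear in the final inequalities (1.12)--(1.14).

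The core dichotomy is then driven by the Lahiri auxiliary function
\[
H = \left(\frac{F''}{F'} - \frac{2F'}{F-1}\right) - \left(\frac{G''}{G'} - \frac{2G'}{G-1}\right).
\]
If $H \equiv 0$, integrating twice yields $\frac{1}{F-1} = \frac{A}{G-1} + B$ for some constants $A \neq 0$ and $B$. A short case analysis on $(A,B)$, together with a comparison of $T(r,F)$ and $T(r,G)$ and the assumed lower bounds on $\Theta(\infty,f)$ and $\delta_{2+k}(0,f)$, rules out every possibility except $F \equiv G$, which is exactly $f^{n} \equiv M[f]$.

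The harder case is $H \not\equiv 0$. Standard bounds then give
\[
N(r,H) \leq \overline{N}_{*}(r,1;F,G) + \overline{N}(r,0;F \mid \geq 2) + \overline{N}(r,0;G \mid \geq 2) + \overline{N}(r,\infty;F) + \overline{N}(r,\infty;G) + \overline{N}_{0}(r,0;F') + \overline{N}_{0}(r,0;G') + S(r,f),
\]
where the discrepancy term $\overline{N}_{*}(r,1;F,G)$ at common $1$-points is controlled by the weight of the sharing and enters with different coefficients according to whether $l \geq 2$, $l=1$ or $l=0$; this is the precise source of the three separate hypotheses (1.12)--(1.14). Combined with the Nevanlinna second fundamental theorem applied to both $F$ and $G$, and after substituting the preliminary bounds on $N_{p}(r,0;M[f])$ and $\overline{N}(r,\infty;M[f])$, one arrives at an estimate of the shape $(n + d_{M})\,T(r,f) \leq \Phi(l)\,T(r,f) + S(r,f)$, and the hypothesised inequality on the $\Theta$- and $\delta$-terms is tailored precisely to contradict this. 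The main obstacle, in my view, is the bookkeeping in this last step: one must arrange the truncations so that the difference $\Gamma_{M} - d_{M} = \lambda$ and the expression $\mu_{2}\,\delta_{\mu_{2}^{*}}(0,f)$ (which separates the cases $n = 1$ and $n \geq 2$) appear with the sharp coefficients claimed, and in such a way that the result specialises correctly to Theorems D and F when $M[f] = f^{(k)}$.
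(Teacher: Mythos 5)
Your proposal follows essentially the same route as the paper's own proof: the same normalisation $F=f^{n}/a$, $G=M[f]/a$, the same auxiliary function $H$, the same monomial-specific estimates $T(r,M)\leq d_{M}T(r,f)+\lambda\overline{N}(r,\infty;f)+S(r,f)$ and $N_{p}(r,0;M)\leq d_{M}N_{p+k}(r,0;f)+\lambda\overline{N}(r,\infty;f)+S(r,f)$ (the paper's Lemmas \ref{l1} and \ref{l9}), and the same three-case bookkeeping via the second fundamental theorem and weighted-sharing lemmas for $l\geq 2$, $l=1$, $l=0$, with the $H\equiv 0$ case handled by integrating to the M\"{o}bius relation---your explicit $(A,B)$ analysis being merely a repackaging of the trichotomy the paper imports as Lemma \ref{l14}. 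One small correction to your sketch of that last case: the subcase $FG\equiv 1$ (i.e.\ $A=B=-1$) cannot be ruled out by the assumed lower bounds on $\Theta(\infty,f)$ and $\delta_{2+k}(0,f)$, since $FG\equiv 1$ forces $f$ to omit $0$ and $\infty$ up to $S(r,f)$ and thus makes all those deficiencies equal to $1$, so the hypotheses are satisfied rather than violated; it must instead be excluded by the characteristic-function argument of the paper's Lemma \ref{l10}, namely $(n+d_{M})T(r,f)=T\left(r,\frac{M}{f^{d_{M}}}\right)+S(r,f)\leq d_{M}N(r,0;f)+\lambda\overline{N}(r,\infty;f)+S(r,f)=S(r,f)$, a contradiction.
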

However the following question is still open.
\begin{question}
Is it possible to extend Theorem \ref{t1} up to differential polynomial instead of differential monomial ?
\end{question}
Following example shows that in Theorem \ref{t1} $a(z) \not\equiv 0,\infty $ is necessary.
\begin{exm} \par
Let us take $f(z)=e^{e^{z}}$ and $M=f'$ then $M$ and $f$ share $0$ (or, $\infty$) and the deficiency conditions stated in theorem \ref{t1} is satisfied as $0$, $\infty$ both are exceptional values of f but $f \not\equiv M$.
\end{exm}
The next example shows that the deficiency conditions stated in Theorem \ref{t1} are not necessary.
\begin{exm} \par
Let $f(z)=Ae^{z}+Be^{-z}$, $AB \neq 0$. Then $\ol{N}(r,f)=S(r,f)$ and $\ol{N}(r,0;f)=\ol{N}(r,-\frac{B}{A};e^{2z})\sim T(r,f)$. Thus $\Theta(\infty,f)=1$ and $\Theta(0,f)=\delta_{p}(0,f)=0$. \par
It is clear that $M[f]=f^{''}$ and $f$ share $a(z)=\frac{1}{z}$ and the deficiency conditions in theorem \ref{t1} is not satisfied, but $M \equiv f$.
\end{exm}
In the next example we see that $f^{n}$ can't be replaced by arbitrary polynomial $P[f]=a_{0}f^{n}+a_{1}f^{n-1}+\ldots+a_{n}$ in Theorem \ref{t1} for IM sharing ($l=0$) case.
\begin{exm} \label{ex1.3} \par
If we take $f(z)=e^{z}$, $P[f]=f^{2}+2f$ and $M[f]=f^{(3)}$, then $P+1=(M+1)^{2}$. Thus $P$ and $M$ share $(-1,0)$. Also $\Theta(0,f)=\Theta(\infty,f)=\delta_{p}(0,f)=\delta(0,f)=1$ as $0$ and $\infty$ are exceptional values of $f$. Thus (\ref{e1.14}) of theorem \ref{t1} is satisfied but $P\not\equiv M$.
\end{exm}
In view of example \ref{ex1.3} the following question is inevitable.
\begin{question}
Is it possible to replace $f^{n}$ by arbitrary polynomial $P[f]=a_{0}f^{n}+a_{1}f^{n-1}+...+a_{n}$ in Theorem \ref{t1} for $l\geq1$ ?
\end{question}

\section{Lemmas} In this section we present some Lemmas which will be needed in the sequel. Let $F$, $G$ be two non-constant meromorphic functions. Henceforth we shall denote by $H$ the following function. \be\label{e2.1}H=\left(\frac{\;\;F^{''}}{F^{'}}-\frac{2F^{'}}{F-1}\right)-\left(\frac{\;\;G^{''}}{G^{'}}-\frac{2G^{'}}{G-1}\right).\ee
\begin{lem}\label{l1.1} $1+\delta_{2}(0,f) \geq 2\Theta(0,f)$.\end{lem}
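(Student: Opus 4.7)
The plan is to unpack the two quantities appearing in the inequality straight from their definitions (Definition 1.2 and Definition 1.3) and reduce the claim to a pointwise inequality between the associated Nevanlinna counting functions, which is visible by inspection.

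First I would write
\begin{eqnarray*}
1+\delta_{2}(0,f)&=&2-\limsup\limits_{r\to\infty}\frac{N_{2}(r,0;f)}{T(r,f)},\\
2\Theta(0,f)&=&2-2\limsup\limits_{r\to\infty}\frac{\ol N(r,0;f)}{T(r,f)},
\end{eqnarray*}
so that the desired inequality $1+\delta_{2}(0,f)\geq 2\Theta(0,f)$ is equivalent to
\begin{eqnarray*}
\limsup\limits_{r\to\infty}\frac{N_{2}(r,0;f)}{T(r,f)}\;\leq\;2\,\limsup\limits_{r\to\infty}\frac{\ol N(r,0;f)}{T(r,f)}.
\end{eqnarray*}

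Next, from Definition 1.2 we have $N_{2}(r,0;f)=\ol N(r,0;f)+\ol N(r,0;f\mid \geq 2)$, and by Definition 1.1 the reduced counting function $\ol N(r,0;f\mid \geq 2)$ counts each multiple zero of $f$ only once, so it is clearly dominated by $\ol N(r,0;f)$. Hence the pointwise estimate
\begin{eqnarray*}
N_{2}(r,0;f)\;\leq\;2\,\ol N(r,0;f)
\end{eqnarray*}
holds for all $r$ outside a possible exceptional set $E$.

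Finally I would divide this inequality by $T(r,f)$ and take the limit superior as $r\to\infty$ with $r\notin E$; since $\limsup$ respects multiplication by the positive constant $2$, this yields precisely the reformulated inequality above and hence the lemma. There is no real obstacle here: the argument is purely definitional, and the only point to be mindful of is that the comparison $\ol N(r,0;f\mid \geq 2)\leq \ol N(r,0;f)$ is built into the definition of the reduced counting functions in Definition 1.1, so no further Nevanlinna machinery is invoked.
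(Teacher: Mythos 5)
Your proof is correct and takes essentially the same route as the paper: both arguments reduce the lemma to the pointwise bound $N_{2}(r,0;f)\leq 2\ol N(r,0;f)$, coming from $N_{2}(r,0;f)=\ol N(r,0;f)+\ol N(r,0;f\mid\geq 2)$ together with $\ol N(r,0;f\mid\geq 2)\leq \ol N(r,0;f)$, and then pass to the limit superior. The only cosmetic differences are that the paper phrases the conclusion as $2\Theta(0,f)-\delta_{2}(0,f)-1\leq 0$ using subadditivity of $\limsup$ while you invoke monotonicity of $\limsup$ directly, and your mention of an exceptional set $E$ is unnecessary since the pointwise inequality holds for every $r$.
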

\begin{proof} \beas 2\Theta(0,f)-\delta_{2}(0,f)-1 &=& \limsup_{r \to \infty}\frac{N_{2}(r,0;f)}{T(r,f)}-\limsup_{r \to \infty}\frac{2\ol{N}(r,0;f)}{T(r,f)}\\
& \leq& \limsup_{r \to \infty}\frac{N_{2}(r,0;f)-2\overline{N}(r,0;f)}{T(r,f)}\\
& \leq& 0. \eeas
\end{proof}
The following three Lemmas can be proved using Milloux Theorem (\cite{4}). So we omit the details.
\begin{lem}\label{l1}  Let $f$ be a non-constant  meromorphic function and $M[f]$  be a differential monomial of degree $d_{M}$ and weight $\Gamma_{M}$. Then $T(r,M)$ $\leq$ $d_{M}T(r,f) + \lambda\overline{N}(r,\infty;f) +S(r,f)$.\end{lem}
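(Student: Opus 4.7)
The plan is to estimate the proximity function $m(r,M)$ and the counting function $N(r,M)$ separately and then add them. For the proximity function, I would exploit the factorisation
\[\frac{M[f]}{f^{d_{M}}}=\prod_{i=0}^{k}\left(\frac{f^{(i)}}{f}\right)^{n_{i}},\]
so Nevanlinna's Lemma on the Logarithmic Derivative (the Milloux-type input the authors refer to), which gives $m(r,f^{(i)}/f)=S(r,f)$ for each $i$, immediately yields $m(r,M/f^{d_{M}})=S(r,f)$, whence
\[m(r,M)\leq d_{M}\,m(r,f)+S(r,f).\]

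For the counting function I would argue locally: if $z_{0}$ is a pole of $f$ of multiplicity $p$, then $f^{(i)}$ has a pole at $z_{0}$ of multiplicity $p+i$, so $M[f]$ has a pole there of multiplicity
\[\sum_{i=0}^{k}n_{i}(p+i)=p\,d_{M}+\lambda,\]
the last equality being just $\lambda=\Gamma_{M}-d_{M}=\sum_{i=0}^{k}i\,n_{i}$. Summing over all poles of $f$ gives
\[N(r,M)\leq d_{M}\,N(r,\infty;f)+\lambda\,\ol{N}(r,\infty;f),\]
and adding the two estimates together with the identity $T=m+N$ produces
\[T(r,M)\leq d_{M}T(r,f)+\lambda\,\ol{N}(r,\infty;f)+S(r,f),\]
which is the asserted bound.

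No real obstacle is expected here; the argument is essentially a routine combination of the logarithmic derivative lemma with elementary pole-order bookkeeping, which is why the authors are content to omit the details. The one point that deserves care is the split of the pole multiplicity at $z_0$ into the $p\,d_{M}$ part, which accumulates into $N(r,\infty;f)$, and the constant $\lambda$ part, which is contributed exactly once per pole and hence attaches to the reduced counting function $\ol{N}(r,\infty;f)$; this is precisely why the sharper constant $\lambda=\Gamma_{M}-d_{M}$ (rather than the full weight $\Gamma_{M}$) appears in the stated inequality.
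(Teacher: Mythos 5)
Your proof is correct and is precisely the argument the paper intends: the authors omit the details, remarking only that the lemma follows from Milloux's theorem, whose substance is exactly the logarithmic derivative estimate $m(r,f^{(i)}/f)=S(r,f)$ that you apply to the factorisation $M/f^{d_M}=\prod_{i=0}^{k}(f^{(i)}/f)^{n_i}$. Your pole-multiplicity bookkeeping, giving a pole of $M$ of order exactly $p\,d_M+\lambda$ at a pole of $f$ of order $p$, correctly accounts for the reduced counting function $\ol N(r,\infty;f)$ with the sharp constant $\lambda=\Gamma_M-d_M$, so the details you supply are exactly the ones the paper suppresses.
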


\begin{lem}\label{l2}
$N(r,0;M) \leq T(r,M)-d_{M}T(r,f)+d_{M}N(r,0;f)+S(r,f).$
\end{lem}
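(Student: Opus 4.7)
The plan is to convert the counting-function inequality into an equivalent statement about proximity functions via the First Fundamental Theorem, and then exploit the logarithmic derivative lemma applied to the ratios $f^{(i)}/f$. Concretely, by Jensen's formula one has
\beas T(r,M)-N(r,0;M)=m(r,1/M)+O(1),\qquad T(r,f)-N(r,0;f)=m(r,1/f)+O(1),\eeas
so the desired estimate is equivalent to
\beas d_{M}\,m(r,1/f)\le m(r,1/M)+S(r,f).\eeas
This is the inequality I will actually establish.

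To get it, I will rewrite $1/f^{d_{M}}$ by inserting $M$:
\beas \frac{1}{f^{d_{M}}}=\frac{1}{M}\cdot\frac{M}{f^{d_{M}}}=\frac{1}{M}\prod_{i=1}^{k}\left(\frac{f^{(i)}}{f}\right)^{\!n_{ij}},\eeas
where the product is over the derivative indices appearing in $M[f]=f^{n_{0}}(f^{(1)})^{n_{1}}\cdots(f^{(k)})^{n_{k}}$ (the factor $i=0$ contributes $1$). Taking $\log^{+}$-means of both sides and using subadditivity of $m(r,\cdot)$ gives
\beas d_{M}\,m(r,1/f)=m(r,1/f^{d_{M}})\le m(r,1/M)+\sum_{i=1}^{k}n_{i}\,m\!\left(r,\frac{f^{(i)}}{f}\right).\eeas

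The remaining ingredient is the logarithmic derivative lemma (Milloux), which gives $m(r,f^{(i)}/f)=S(r,f)$ for each $i=1,\ldots,k$. Summing, the entire contribution $\sum_{i=1}^{k} n_{i}\,m(r,f^{(i)}/f)$ collapses to $S(r,f)$, yielding $d_{M}\,m(r,1/f)\le m(r,1/M)+S(r,f)$. Substituting back the First Fundamental Theorem identities from the first paragraph then produces exactly $N(r,0;M)\le T(r,M)-d_{M}T(r,f)+d_{M}N(r,0;f)+S(r,f)$.

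There is essentially no obstacle here; the only point that requires minor care is the boundary case of the decomposition when some $n_{i}$ with $i\ge 1$ is zero (the corresponding factor is just $1$ and can be dropped) and the fact that the error term from applying the logarithmic derivative lemma finitely many times remains $S(r,f)$ because the number of factors and the coefficients $n_{i}$ depend only on $M$, not on $r$.
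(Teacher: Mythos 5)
Your proof is correct, and it follows exactly the route the paper intends: the paper omits the details but attributes the lemma to Milloux's theorem, which is precisely the logarithmic derivative estimate $m\left(r,f^{(i)}/f\right)=S(r,f)$ that you combine with the factorization $1/f^{d_{M}}=(1/M)\prod_{i=1}^{k}\left(f^{(i)}/f\right)^{n_{i}}$ and the First Fundamental Theorem. Your reduction to $d_{M}\,m(r,1/f)\leq m(r,1/M)+S(r,f)$ is the standard argument for this statement, so there is nothing to add.
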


\begin{lem}\label{l3} $N(r,0;M) \leq d_{M}N(r,0;f)+\lambda\overline{N}(r,\infty;f)+S(r,f).$
\end{lem}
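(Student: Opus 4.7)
The plan is to derive Lemma \ref{l3} directly as a combination of the two preceding lemmas. Lemma \ref{l2} controls $N(r,0;M)$ in terms of $T(r,M)$, while Lemma \ref{l1} controls $T(r,M)$ in terms of $T(r,f)$; when combined, the $T(r,f)$ contributions cancel exactly, leaving only the counting-function terms in the desired inequality.

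Concretely, I would first invoke Lemma \ref{l2} to write
\[
N(r,0;M) \;\leq\; T(r,M) - d_{M}T(r,f) + d_{M}N(r,0;f) + S(r,f).
\]
Next, I would apply Lemma \ref{l1}, which yields
\[
T(r,M) - d_{M}T(r,f) \;\leq\; \lambda\,\overline{N}(r,\infty;f) + S(r,f).
\]
Substituting the second estimate into the first then gives
\[
N(r,0;M) \;\leq\; d_{M}N(r,0;f) + \lambda\,\overline{N}(r,\infty;f) + S(r,f),
\]
which is exactly the claim of Lemma \ref{l3}.

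Since the result is obtained purely by algebraic combination of two inequalities established just before, there is no genuine obstacle at this stage; the real content lies in Lemmas \ref{l1} and \ref{l2}, whose proofs rest on Milloux's theorem and, ultimately, on the logarithmic derivative lemma $m(r,f^{(i)}/f)=S(r,f)$. If one preferred a self-contained proof avoiding Lemma \ref{l2}, the cleanest route would be to use the first fundamental theorem in the form $N(r,0;M) = T(r,M) - m(r,1/M) + O(1)$, then observe that $1/f^{d_{M}} = (1/M)\prod_{i=0}^{k}(f^{(i)}/f)^{n_{i}}$ gives the lower bound $m(r,1/M) \geq d_{M}\,m(r,1/f) + S(r,f)$ via the logarithmic derivative lemma, and finally apply Lemma \ref{l1} together with $T(r,f) = m(r,1/f) + N(r,0;f) + O(1)$ to collect terms. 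Either route produces the stated inequality with the sharp constant $\lambda = \Gamma_{M} - d_{M}$ in front of $\overline{N}(r,\infty;f)$, which reflects the fact that a pole of $f$ of order $p$ contributes to $M[f]$ a pole of order $d_{M}p + \lambda$.
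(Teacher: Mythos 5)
Your proof is correct. Note that the paper gives no argument of its own for this statement: Lemmas \ref{l1}, \ref{l2} and \ref{l3} are stated together with the remark that they ``can be proved using Milloux Theorem'' and the details are omitted, so there is no written proof to match against. Your main route --- observing that Lemma \ref{l3} is an immediate algebraic consequence of the two preceding lemmas, since Lemma \ref{l1} gives $T(r,M)-d_{M}T(r,f)\leq \lambda\overline{N}(r,\infty;f)+S(r,f)$ and substituting this into Lemma \ref{l2} makes the $T(r,f)$ terms cancel exactly --- is valid and is arguably the cleanest derivation available once Lemmas \ref{l1} and \ref{l2} are in hand; it also makes transparent why no new analytic input is needed at this step. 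Your alternative self-contained sketch (the first fundamental theorem in the form $N(r,0;M)=T(r,M)-m(r,1/M)+O(1)$, together with $m(r,1/M)\geq d_{M}\,m(r,1/f)-S(r,f)$ from the identity $1/f^{d_{M}}=(1/M)\prod_{i=0}^{k}(f^{(i)}/f)^{n_{i}}$ and the lemma of the logarithmic derivative) is essentially the Milloux-type argument the paper alludes to, and in fact reproves Lemma \ref{l2} en route, so the two routes are not independent so much as nested. Your closing observation that a pole of $f$ of order $p$ produces a pole of $M[f]$ of order $d_{M}p+\lambda$ is correct (since $\sum_{i}n_{i}(p+i)=d_{M}p+\Gamma_{M}-d_{M}$) and is the same local computation the paper uses in its proof of Lemma \ref{l5}.
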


\begin{lem}\label{l4}\cite{6} Let $f$ be a non-constant meromorphic function and let \[R(f)=\frac{\sum\limits _{i=0}^{n} a_{i}f^{i}}{\sum \limits_{j=0}^{m} b_{j}f^{j}}\] be an irreducible rational function in $f$ with constant coefficients $\{a_{i}\}$ and $\{b_{j}\}$ where $a_{n}\not=0$ and $b_{m}\not=0$. Then $$T(r,R(f))=pT(r,f)+S(r,f),$$ where $p=\max\{n,m\}$.\end{lem}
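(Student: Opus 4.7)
The plan is to establish matching upper and lower asymptotic bounds on $T(r,R(f))$.

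For the upper bound, I would perform polynomial Euclidean division and partial fractions over $\mathbb{C}$. Writing $P=SQ+\widetilde{P}$ with $\deg\widetilde{P}<m$ yields
\[R(y)=S(y)+\sum_{j}\sum_{k=1}^{r_j}\frac{c_{jk}}{(y-\beta_j)^k},\]
where $S$ is a polynomial of degree $\max(n-m,0)$ and $\beta_1,\beta_2,\ldots$ are the distinct zeros of $Q$ with multiplicities $r_j$ summing to $m$. Substituting $f$ for $y$, the poles of $R(f)$ arise only at $\beta_j$-points of $f$ (each contributing multiplicity $r_j$ times) and, when $n>m$, at the poles of $f$ (contributing multiplicity $n-m$ times), giving
\[N(r,R(f))=\max(n-m,0)\,N(r,\infty;f)+\sum_{j}r_j\,N(r,\beta_j;f)+O(1).\]
Elementary pointwise estimates $|S(y)|\leq C(1+|y|)^{\max(n-m,0)}$ and $\bigl|\sum_{k}c_{jk}/(y-\beta_j)^k\bigr|\leq C|y-\beta_j|^{-r_j}$ near each $\beta_j$ then yield
\[m(r,R(f))\leq\max(n-m,0)\,m(r,\infty;f)+\sum_{j}r_j\,m(r,1/(f-\beta_j))+O(1).\]
Summing and invoking the first fundamental theorem $N(r,\beta_j;f)+m(r,1/(f-\beta_j))=T(r,f)+O(1)$ collapses the right-hand side to $(\max(n-m,0)+m)\,T(r,f)+O(1)=pT(r,f)+O(1)$.

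For the lower bound I would apply Nevanlinna's second main theorem to $f$ with many target values pulled back from $R$. Fix a large integer $K$ and choose distinct $w_1,\ldots,w_K\in\mathbb{C}$ avoiding the finitely many critical and exceptional values of $R$ (and avoiding $a_n/b_m$ when $n=m$, or $0$ when $n<m$), so that each equation $R(y)=w_i$ has exactly $p$ distinct finite simple roots $\alpha_{i,1},\ldots,\alpha_{i,p}$. Since distinct $w_i$ have disjoint $R$-fibres, the $Kp$ points $\alpha_{i,j}$ are automatically pairwise distinct. Simpleness of the roots means an $\alpha_{i,j}$-point of $f$ of multiplicity $\mu$ is precisely a $w_i$-point of $R(f)$ of multiplicity $\mu$, whence $\ol N(r,w_i;R(f))=\sum_{j=1}^{p}\ol N(r,\alpha_{i,j};f)$ for every $i$. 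Applying SMT to $f$ with these $Kp$ targets gives
\[(Kp-2)\,T(r,f)\leq\sum_{i,j}\ol N(r,\alpha_{i,j};f)+S(r,f)=\sum_{i=1}^{K}\ol N(r,w_i;R(f))+S(r,f)\leq K\,T(r,R(f))+S(r,f),\]
whence $T(r,R(f))\geq(p-2/K)\,T(r,f)-S(r,f)$. Combined with the matching upper bound $T(r,R(f))\leq pT(r,f)+O(1)$, the normalized difference $(T(r,R(f))-pT(r,f))/T(r,f)$ is squeezed between $-2/K+o(1)$ and $o(1)$ for arbitrarily large $K$, forcing $T(r,R(f))=pT(r,f)+S(r,f)$.

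The main obstacle is the lower bound, since SMT degrades linearly in the number of targets used and the sharp constant $p$ can only be recovered by letting $K\to\infty$; the matching upper bound is needed to close the squeeze. The upper-bound calculation itself is completely elementary once the partial-fractions expansion is in place, working uniformly in the sign of $n-m$ because the polynomial part $S$ of the decomposition carries exactly the extra $n-m$ degrees of growth precisely when they are needed.
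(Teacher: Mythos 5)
You have proved the right statement, but note that the paper itself offers no proof here: Lemma \ref{l4} is quoted directly from Mokhon'ko \cite{6}, and the classical Valiron--Mokhon'ko argument is purely algebraic --- an induction on the degree of $R$ using only the first fundamental theorem and elementary properties of the characteristic --- which for constant coefficients yields the sharper conclusion $T(r,R(f))=pT(r,f)+O(1)$, with no exceptional set and no appeal to the second main theorem. Your route is genuinely different. The partial-fraction upper bound $T(r,R(f))\leq pT(r,f)+O(1)$ is a correct and standard computation: irreducibility guarantees each $\beta_j$ is a pole of $R$ of exact order $r_j$, the fibre analysis for $N(r,R(f))$ is right, and the $\log^+$ estimates give the proximity bound (with the usual cutoff at distance $1$ from each $\beta_j$). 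The lower bound via the second main theorem applied to the $Kp$ points $\alpha_{i,j}$ is also valid, including the fibre identity $\ol{N}(r,w_i;R(f))=\sum_{j}\ol{N}(r,\alpha_{i,j};f)$, which indeed requires $w_i$ to avoid both the critical values of $R$ and the value $R(\infty)$, as you arranged. What SMT buys you is brevity; what it costs is the quality of the error term, which is why you only recover $S(r,f)$ rather than $O(1)$ --- still exactly what the lemma as stated asserts.

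One step you gloss over should be written out: for each fixed $K$ the inequality $T(r,R(f))\geq (p-2/K)T(r,f)-S_{K}(r,f)$ holds outside an exceptional set $E_{K}$ of finite linear measure that \emph{depends on} $K$, so squeezing \enquote{for arbitrarily large $K$} does not by itself produce a single finite-measure exceptional set. The repair is a routine diagonalization: choose $R_{K}\uparrow\infty$ such that $\mathrm{meas}\left(E_{K}\cap[R_{K},\infty)\right)\leq 2^{-K}$ and such that the $o(1)$ term is at most $1/K$ for $r\geq R_{K}$, $r\notin E_{K}$, and set $E=\bigcup_{K}\left(E_{K}\cap[R_{K},R_{K+1})\right)$; then $\mathrm{meas}(E)<\infty$, and for $r\in[R_{K},R_{K+1})\setminus E$ the normalized difference $(T(r,R(f))-pT(r,f))/T(r,f)$ is at least $-3/K$, which together with your unconditional upper bound forces $T(r,R(f))=pT(r,f)+S(r,f)$. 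With that sentence added, your proof is complete and self-contained, though weaker than Mokhon'ko's original $O(1)$ statement.
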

\begin{lem} \label{l5} $N(r,\infty;\frac{M}{f^{d_{M}}}) \leq d_{M}N(r,0;f)+\lambda\overline{N}(r,\infty;f)+S(r,f)$.\end{lem}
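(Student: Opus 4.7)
The plan is to factor
\[ \frac{M[f]}{f^{d_{M}}} \;=\; \prod_{i=0}^{k}\left(\frac{f^{(i)}}{f}\right)^{n_{i}}, \]
thereby reducing the estimate to a question about logarithmic-derivative–type quotients. Each factor $f^{(i)}/f$ has poles confined to the zeros and poles of $f$, and by the lemma of the logarithmic derivative its proximity function is $S(r,f)$, so that $T(r,f^{(i)}/f)=N(r,\infty;f^{(i)}/f)+S(r,f)$.

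The main step is a local pole count for each $f^{(i)}/f$. Near a pole of $f$ of multiplicity $q$, the expansion $f=(z-z_{0})^{-q}h$ (with $h$ finite and nonzero at $z_{0}$) shows that $f^{(i)}$ has a pole of order $q+i$ with nonvanishing leading coefficient, so $f^{(i)}/f$ has a pole of order exactly $i$. Near a zero of $f$ of multiplicity $p$, the expansion $f=(z-z_{0})^{p}g$ with $g(z_{0})\neq 0$ shows that $f^{(i)}$ vanishes to order at least $\max(p-i,0)$, so $f^{(i)}/f$ has a pole of order at most $\min(p,i)\leq p$. Integrating these local bounds,
\[ N(r,\infty;f^{(i)}/f)\;\leq\;i\,\overline{N}(r,\infty;f)+N(r,0;f). \]

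Finally, subadditivity of $T$ and the logarithmic-derivative estimate combine to give
\[ N\!\left(r,\infty;\tfrac{M}{f^{d_{M}}}\right)\leq\sum_{i=0}^{k}n_{i}T(r,f^{(i)}/f)+O(1)\leq\sum_{i=0}^{k}n_{i}\bigl[i\,\overline{N}(r,\infty;f)+N(r,0;f)\bigr]+S(r,f), \]
and since $\sum_{i}i\,n_{i}=\lambda$ and $\sum_{i}n_{i}=d_{M}$, the right-hand side collapses to $\lambda\,\overline{N}(r,\infty;f)+d_{M}N(r,0;f)+S(r,f)$, as claimed. The only delicate point is the grouping at zeros of $f$: the naive bound $\min(p,i)\leq i$ would deliver $\lambda\,\overline{N}(r,0;f)$, which is the wrong shape; one must use $\min(p,i)\leq p$ to bring out the factor $d_{M}=\sum_{i}n_{i}$ in front of the \emph{multiplicity-counted} zero term $N(r,0;f)$, which is what matches the stated estimate.
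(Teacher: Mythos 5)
Your proof is correct, and its core — the local pole counts at the zeros and poles of $f$, with the decisive grouping $\min(p,i)\le p$ at zeros so that the multiplicity-counted $N(r,0;f)$ appears with the factor $d_{M}=\sum_{i}n_{i}$ — is exactly the paper's computation; the paper simply performs it on the quotient $\frac{M}{f^{d_{M}}}$ in one stroke rather than factorwise: at a pole of $f$ of any order $t$, each $f^{(i)}$ has a pole of exact order $t+i$, so $\frac{M}{f^{d_{M}}}$ has a pole of order exactly $\sum_{i}in_{i}=\lambda$ (independent of $t$, whence the reduced counting function $\overline{N}(r,\infty;f)$), while at a zero of order $s$ it has a pole of order at most $sd_{M}$; since $\frac{M}{f^{d_{M}}}$ has no other poles, summing these local bounds already gives the stated inequality with no Nevanlinna machinery at all. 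Where you genuinely diverge is the global bookkeeping: you route through the characteristic function, using subadditivity of $T$ over the product $\prod_{i}\bigl(f^{(i)}/f\bigr)^{n_{i}}$ together with the lemma on the logarithmic derivative to convert $T(r,f^{(i)}/f)$ back into $N(r,\infty;f^{(i)}/f)+S(r,f)$. That detour is valid — $m(r,f^{(i)}/f)=S(r,f)$ holds for higher-order quotients, consistent with the Milloux-type tools the paper uses elsewhere — but it imports machinery the lemma does not need: in the paper's argument the $S(r,f)$ term is pure slack, whereas in yours it genuinely absorbs the proximity terms. You could streamline your own route by replacing the $T$-plus-logarithmic-derivative step with the elementary subadditivity of pole counting functions with multiplicity, $N(r,\infty;g_{1}g_{2})\le N(r,\infty;g_{1})+N(r,\infty;g_{2})$, applied to your factorization; your local estimate $N(r,\infty;f^{(i)}/f)\le i\,\overline{N}(r,\infty;f)+N(r,0;f)$ then finishes the proof directly, with no exceptional set and no appeal to the logarithmic derivative.
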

\begin{proof}Let $z_{0}$ be a pole of $f$  of order $t$. Then it is a pole of $\frac{M}{f^{d_{M}}}$ of order $n_{1}+2n_{2}+...+kn_{k}=\lambda$.\\
  Let  $z_{0}$ be a zero of $f$  of order $s$. Then it is a pole of $\frac{M}{f^{d_{M}}}$ of order at most $sd_{M}$.\\
So, $N(r,\infty;\frac{M}{f^{d_{M}}}) \leq d_{M}N(r,0;f)+\lambda\overline{N}(r,\infty;f)+S(r,f)$.\end{proof}
\begin{lem}\label{l7} For any two non-constant meromorphic functions $f_{1}$ and $f_{2}$,
\par $N_{p}(r,\infty;f_{1}f_{2}) \leq N_{p}(r,\infty;f_{1})+N_{p}(r,\infty;f_{2})$.\end{lem}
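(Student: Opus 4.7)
The statement is a pointwise comparison of counting functions, so the plan is to prove it by verifying the inequality at each pole and summing. First I would recall that, unravelling the definition of $N_p$, a pole of $f$ of multiplicity $m$ contributes exactly $\min(m,p)$ to $N_p(r,\infty;f)$. Thus it suffices to show that for every point $z_0$ in the disc of radius $r$, the contribution to $N_p(r,\infty;f_1f_2)$ from $z_0$ is at most the sum of the contributions to $N_p(r,\infty;f_1)$ and $N_p(r,\infty;f_2)$ from $z_0$.

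Fix $z_0$ and let $m_i$ denote the pole order of $f_i$ at $z_0$ (with the convention $m_i=0$ if $f_i$ is holomorphic or vanishes there). The key observation is that the pole order $m$ of the product $f_1f_2$ at $z_0$ satisfies $m\leq m_1+m_2$; indeed, whenever one factor has a zero that partially cancels the pole of the other, the pole order only decreases. So the required inequality reduces to the elementary numerical fact
\begin{equation*}
\min(m_1+m_2,\,p)\;\leq\;\min(m_1,p)+\min(m_2,p)
\end{equation*}
for nonnegative integers $m_1,m_2$ and positive integer $p$.

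I would then dispatch this by cases: if $m_1+m_2\leq p$ the left side equals $m_1+m_2$ and is obviously at most the right side; if $m_1+m_2>p$ and, say, $m_1\geq p$, the right side already equals at least $p$; otherwise $m_1<p$ forces $m_2>p-m_1\geq 0$, giving $\min(m_2,p)\geq p-m_1$, whence the right side is at least $m_1+(p-m_1)=p$. Summing the resulting pointwise bound $\min(m,p)\leq \min(m_1,p)+\min(m_2,p)$ over all $z_0$ (weighted by the usual $\log(r/|z_0|)$ kernel in the definition of the integrated counting function) yields the lemma. There is no real obstacle here; the only point that needs care is making sure the ``pole order of $f_1f_2$ is at most $m_1+m_2$'' clause is justified even in the presence of cancellation between a zero of one factor and a pole of the other, which is handled by the $m_i\geq 0$ convention above.
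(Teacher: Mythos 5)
Your proof is correct and follows essentially the same route as the paper's: a pointwise comparison of truncated pole multiplicities, verified by case analysis and then summed into the integrated counting functions. Your packaging via the numerical inequality $\min(m_1+m_2,p)\leq\min(m_1,p)+\min(m_2,p)$ with the convention $m_i=\max\{t_i,0\}$ is in fact a cleaner organization of the paper's four-case argument, which instead allows negative ``pole orders'' $t_i$ to account for cancellation by zeros.
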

\begin{proof} Let $z_{0}$ be a pole of $f_{i}$ of order $t_{i}$ for $i=1,2.$ Then $z_{0}$ be a pole of $f_{1}f_{2}$ of order at most $t_{1}+t_{2}$.\\
\textbf{Case-1 :} Let $t_{1} \geq p $ and $t_{2} \geq p $. Then $t_{1}+t_{2}\geq p$. So $z_{0}$ is counted at most $p$ times in the left hand side of the above counting function, whereas the same is counted $p+p$ times in the right hand side of the above counting function.\\
\textbf{Case-2 :}  Let $t_{1} \geq p $ and $t_{2} < p $.\\
 \textbf{Subcase-2.1} Let $t_{1}+t_{2}\geq p$. So $z_{0}$ is counted at most $p$ times in the left hand side of the above counting function, whereas the same is counted as $p+\max\{0,t_{2}\}$ times in the right hand side of the above counting function.\\
 \textbf{Subcase-2.2} Let $t_{1}+t_{2}< p$. This case is occurred if $t_{2}$ is negative i.e. if $z_{0}$ is a zero of $f_{2}$. Then $z_{0}$ is counted at most $\max\{0,t_{1}+t_{2}\}$ times whereas the same is counted $p$  times in the right hand side of the above expression. \\
\textbf{Case-3 :} Let $t_{1} < p $ and $t_{2} \geq p $. Then $t_{1}+t_{2}\geq p$. This case can be disposed off as done in Case 2.\\
\textbf{Case-4 :} Let $t_{1} < p $ and $t_{2} < p $ \\
\textbf{Subcase-4.1 :} Let $t_{1}+t_{2}\geq p$. \\ Then $z_{0}$ is counted at most $p$ times whereas the same is counted $\max\{0,t_{1}\}+\max\{0,t_{2}\}$ times in the right hand side of the above expression. \\
\textbf{Subcase-4.2 :} Let $t_{1}+t_{2} < p$. \\ Then $z_{0}$ is counted at most $\max\{0,t_{1}+t_{2}\}$ times whereas $z_{0}$ is counted $\max\{0,t_{1}\}+\max\{0,t_{2}\}$ times in the right hand side of the above counting functions. Combining all the cases, Lemma \ref{l7} follows.\end{proof}
\begin{lem}(\cite{5})\label{l8} $N_{p}(r,0;f^{(k)}) \leq N_{p+k}(r,0;f)+k\overline{N}(r,\infty;f)+S(r,f).$ \end{lem}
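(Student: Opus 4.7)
The plan is to derive the inequality in two steps: first I would establish an untruncated version of the bound, then I would deduce the truncated version by a pointwise multiplicity comparison.

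For the first step, I would show
\[N(r,0;f^{(k)}) \leq N(r,0;f) + k\ol{N}(r,\infty;f) + S(r,f),\]
via Nevanlinna's first fundamental theorem combined with two applications of the lemma on the logarithmic derivative. Writing $f^{(k)} = f\cdot(f^{(k)}/f)$ and using $m(r,f^{(k)}/f) = S(r,f)$ gives $m(r,f^{(k)}) \le m(r,f) + S(r,f)$; since each pole of $f$ of order $t$ produces a pole of $f^{(k)}$ of order $t+k$, one has $N(r,\infty;f^{(k)}) = N(r,\infty;f) + k\ol{N}(r,\infty;f)$, so that $T(r,f^{(k)}) \leq T(r,f) + k\ol{N}(r,\infty;f) + S(r,f)$. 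Dually, from $1/f = (1/f^{(k)})\cdot(f^{(k)}/f)$ and the LD lemma one obtains $m(r,1/f^{(k)}) \geq m(r,1/f) - S(r,f)$. Substituting these into $N(r,0;f^{(k)}) = T(r,f^{(k)}) - m(r,1/f^{(k)}) + O(1)$ and using $T(r,f) - m(r,1/f) = N(r,0;f) + O(1)$ closes this step.

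For the second step, I would establish the pointwise comparison
\[N(r,0;f^{(k)}) - N_p(r,0;f^{(k)}) \geq N(r,0;f) - N_{p+k}(r,0;f).\]
The contribution of a point $z_0$ to $N(r,0;g) - N_p(r,0;g)$ is $\max\{0,\, \mathrm{ord}_{z_0}(g) - p\}$, where $\mathrm{ord}_{z_0}(g)$ denotes the order of zero of $g$ at $z_0$ (taken to be $0$ if $g$ does not vanish there or has a pole). I would check three cases. If $f$ has a zero of order $m \ge k+1$ at $z_0$, then $f^{(k)}$ has a zero of order exactly $m-k$ there, and both sides contribute $\max\{0,\, m-k-p\}$. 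If $f$ has a zero of order $m \leq k$ at $z_0$, the right side contributes $0$ since $m - p - k \leq 0$, while the left side is non-negative. If $z_0$ is a regular non-vanishing point of $f$ or a pole of $f$, again the right side contributes $0$. Integrating (summing over $z_0$) yields the claimed pointwise inequality.

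Combining the two steps,
\[N_p(r,0;f^{(k)}) - N_{p+k}(r,0;f) \leq N(r,0;f^{(k)}) - N(r,0;f) \leq k\ol{N}(r,\infty;f) + S(r,f),\]
which is the desired conclusion. The main subtlety lies in the low-multiplicity case $m \leq k$ in Step 2: there $f^{(k)}$ may acquire an "unexpected" zero of arbitrary order, but the bookkeeping absorbs this because the right-hand contribution at such a point is automatically zero. The $k\ol{N}(r,\infty;f)$ term in the final bound arises entirely from the multiplicity jump at poles when bounding $T(r,f^{(k)})$ in Step 1.
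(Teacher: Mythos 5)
The paper offers no proof of this lemma at all: it is quoted verbatim from Lahiri--Sarkar \cite{5}, so there is no internal argument to compare against and your proof has to stand on its own. It does, and it is correct. Step 1 is the classical untruncated estimate: the chain $m(r,f^{(k)})\le m(r,f)+S(r,f)$, the exact pole count $N(r,\infty;f^{(k)})=N(r,\infty;f)+k\overline{N}(r,\infty;f)$, the dual bound $m(r,1/f^{(k)})\ge m(r,1/f)-S(r,f)$ from $1/f=(1/f^{(k)})\cdot(f^{(k)}/f)$, and two applications of the First Fundamental Theorem assemble correctly into $N(r,0;f^{(k)})\le N(r,0;f)+k\overline{N}(r,\infty;f)+S(r,f)$. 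Step 2 is the right mechanism for restoring the truncations: since a zero of multiplicity $m$ is counted $\min\{m,p\}$ times in $N_{p}$ (this matches Definition 1.2 of the paper), the per-point contribution to $N-N_{p}$ is $\max\{0,m-p\}$, and your three cases are exhaustive --- in particular you correctly isolate the one delicate configuration, a zero of $f$ of order $m\le k$ at which $f^{(k)}$ may vanish to arbitrarily high order, and note that such points (like zeros of $f^{(k)}$ where $f\ne 0$, covered by your Case 3) can only enlarge the left-hand side of the pointwise inequality, since the right-hand contribution $\max\{0,m-p-k\}$ vanishes there. The pointwise inequality holds at the level of the unintegrated counting functions $n(t,\cdot)$ for every $t$, hence survives integration, and combining the two steps gives exactly the stated bound. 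This is essentially the standard derivation found in the cited literature; the only common alternative route is induction on $k$ from the case $k=1$ (using $N_{p}(r,0;f')\le N_{p+1}(r,0;f)+\overline{N}(r,\infty;f)+S(r,f)$ together with $\overline{N}(r,\infty;f')=\overline{N}(r,\infty;f)$), which yields the same constants, so nothing is lost or gained by your global two-step version.
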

\begin{lem}\label{l9} For the differential monomial $M[f]$,\par $N_{p}(r,0;M[f]) \leq d_{M}N_{p+k}(r,0;f)+\lambda\overline{N}(r,\infty;f)+S(r,f)$. \end{lem}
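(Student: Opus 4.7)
The plan is to reduce the bound on $N_p(r,0;M[f])$ to two ingredients already at hand: the additive behaviour of $N_p$ under products (Lemma \ref{l7}) and the Milloux-type estimate for the zeros of a $k$-th derivative (Lemma \ref{l8}). Concretely, writing $M[f]=(f)^{n_0}(f')^{n_1}\cdots(f^{(k)})^{n_k}$, the idea is to treat $M[f]$ as a product of $d_M=\sum_{i=0}^{k}n_i$ factors, distribute $N_p(r,0;\cdot)$ across the product, and then trade the order of each derivative for an appropriate number of $\overline{N}(r,\infty;f)$ terms.

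First I would record the zero-version of Lemma \ref{l7}: since $N_p(r,0;g)=N_p(r,\infty;1/g)$ and $1/(f_1 f_2)=(1/f_1)(1/f_2)$, the lemma yields $N_p(r,0;f_1 f_2)\le N_p(r,0;f_1)+N_p(r,0;f_2)$. Iterating this inequality across the $d_M$ factors of $M[f]$ gives
\begin{equation*}
N_p(r,0;M[f])\le \sum_{i=0}^{k} n_i\, N_p(r,0;f^{(i)}).
\end{equation*}
(For $i=0$ the corresponding term is simply $n_0 N_p(r,0;f)$, which fits the same pattern with no extra pole contribution.)

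Next I would apply Lemma \ref{l8} to each summand with $i\ge 1$, getting
\begin{equation*}
N_p(r,0;f^{(i)})\le N_{p+i}(r,0;f)+i\,\overline{N}(r,\infty;f)+S(r,f),
\end{equation*}
and then use the elementary monotonicity $N_{p+i}(r,0;f)\le N_{p+k}(r,0;f)$ valid for $0\le i\le k$. Substituting into the previous display and collecting the coefficients,
\begin{equation*}
N_p(r,0;M[f])\le \Bigl(\sum_{i=0}^{k} n_i\Bigr)N_{p+k}(r,0;f)+\Bigl(\sum_{i=0}^{k} i\,n_i\Bigr)\overline{N}(r,\infty;f)+S(r,f).
\end{equation*}
Since $\sum_{i=0}^{k} n_i=d_M$ and $\sum_{i=0}^{k} i\, n_i=\Gamma_M-d_M=\lambda$, this is exactly the claimed inequality.

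There is no real obstacle here: the only place one must be slightly careful is the first step, where Lemma \ref{l7} (stated for poles) is transferred to zeros via reciprocals, and the iteration across $d_M$ factors in the monomial; everything else is a direct application of Lemma \ref{l8} followed by the identity $\Gamma_M-d_M=\lambda$.
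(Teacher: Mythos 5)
Your proof is correct and follows essentially the same route as the paper's: split $N_p(r,0;M[f])$ over the factors of the monomial via Lemma \ref{l7}, apply Lemma \ref{l8} to each $N_p(r,0;f^{(i)})$, use the monotonicity $N_{p+i}(r,0;f)\leq N_{p+k}(r,0;f)$, and collect coefficients via $\sum_i n_i=d_M$ and $\sum_i i\,n_i=\Gamma_M-d_M=\lambda$. The only difference is cosmetic: you make explicit the passage from the pole version of Lemma \ref{l7} to zeros via reciprocals, a step the paper leaves implicit.
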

\begin{proof}
Clearly for any non-constant meromorphic function $f$, $N_{p}(r,f) \leq N_{q}(r,f)$ if $p \leq q$.\par
Now by using the above fact and Lemma \ref{l7}, Lemma \ref{l8}, we get
\beas N_{p}(r,0;M[f]) &\leq& \sum\limits_{i=0}^{k} n_{i}N_{p}(r,0;f^{(i)})+S(r,f)\\
&\leq& \sum\limits_{i=0}^{k} n_{i}\{N_{p+i}(r,0;f)+i\overline{N}(r,\infty;f)\}+S(r,f)\\
&\leq& \sum\limits_{i=0}^{k} n_{i}N_{p+i}(r,0;f)+\lambda\overline{N}(r,\infty;f)+S(r,f) \\
&\leq& \sum\limits_{i=0}^{k} n_{i}N_{p+k}(r,0;f)+\lambda\overline{N}(r,\infty;f)+S(r,f) \\
&\leq& d_{M}N_{p+k}(r,0;f)+\lambda\overline{N}(r,\infty;f)+S(r,f).\eeas\end{proof}
\begin{lem}\label{l10}  Let $f$ be a non-constant  meromorphic function and $a(z)$ be a small function in $f$. Let us define $F=\frac{f^{n}}{a}, G=\frac{M}{a}$. Then $FG \not\equiv 1$.\end{lem}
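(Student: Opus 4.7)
The approach is proof by contradiction. Suppose $FG\equiv 1$, equivalently $f^{n}M[f]\equiv a^{2}$. The plan is to push enough information out of this single scalar identity to derive $T(r,f)=S(r,f)$, which contradicts the non-constancy of $f$.

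The first step is to examine the identity at points where $a$ is holomorphic and nonzero. A pole of $f$ of order $t\geq 1$ at such a point would produce a pole of $f^{n}M[f]$ of order $t(n+d_{M})+\lambda\geq 1$, while the right hand side $a^{2}$ is finite and nonzero there; so every pole of $f$ must occur at a zero or pole of $a$, giving $\overline{N}(r,\infty;f)=S(r,f)$. A symmetric local argument at a zero of $f$ (where the left side vanishes to order at least $n\geq 1$ but $a^{2}$ does not vanish) yields $\overline{N}(r,0;f)=S(r,f)$.

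The second step is to rewrite $M[f]$ relative to $f^{d_{M}}$. Set $L:=M[f]/f^{d_{M}}=\prod_{i=1}^{k}(f^{(i)}/f)^{n_{i}}$. The logarithmic derivative lemma yields $m(r,f^{(i)}/f)=S(r,f)$, and since the poles of $f^{(i)}/f$ occur only at zeros and poles of $f$ with multiplicity at most $i$, the estimate obtained in the first step gives $N(r,\infty;f^{(i)}/f)\leq i\bigl(\overline{N}(r,0;f)+\overline{N}(r,\infty;f)\bigr)=S(r,f)$. Hence $T(r,f^{(i)}/f)=S(r,f)$ for each $i$, and so $T(r,L)=S(r,f)$.

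The final step is a direct substitution: the hypothesized identity becomes $f^{n+d_{M}}=a^{2}/L$, and therefore
\[
(n+d_{M})T(r,f)=T(r,f^{n+d_{M}})\leq 2T(r,a)+T(r,L)+O(1)=S(r,f).
\]
Since $n+d_{M}\geq 2$, this forces $T(r,f)=S(r,f)$, contradicting the assumption that $f$ is non-constant. The only delicate point in the argument is the clean control on $T(r,L)$ in the second step; once the zero and pole counts of $f$ have been shown to be negligible in the first step, this control reduces to routine use of the logarithmic derivative lemma.
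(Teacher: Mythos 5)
Your proof is correct and takes essentially the same route as the paper: both arguments reduce the assumed identity $f^{n}M[f]\equiv a^{2}$ to the fact that $M/f^{d_{M}}=a^{2}/f^{n+d_{M}}$ has characteristic $S(r,f)$, forcing $(n+d_{M})T(r,f)=S(r,f)$ — the paper invokes Mokhon'ko's Lemma \ref{l4} together with Lemma \ref{l5} and the logarithmic derivative bound, where you estimate $T(r,M/f^{d_{M}})$ directly. In fact your Step 1, confining the zeros and poles of $f$ to the zeros and poles of $a$, is precisely the justification that the paper leaves implicit in its final equality $d_{M}N(r,0;f)+\lambda\overline{N}(r,\infty;f)=S(r,f)$, so your write-up supplies a detail the paper's three-line proof glosses over.
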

\begin{proof} On contrary assume $FG \equiv 1$. Then in view of Lemma \ref{l5} and the First Fundamental Theorem, we get
 \begin{eqnarray*} (n+d_{M})T(r,f) &=&T(r,\frac{M}{f^{d_{M}}})+S(r,f)\\
 &\leq& d_{M}N(r,0;f)+\lambda\overline{N}(r,\infty;f)+S(r,f)\\ &=&S(r,f),\end{eqnarray*} which is a contradiction.\end{proof}
 \begin{lem}\label{l11}(\cite{1}) Let $F$ and $G$ share $(1,l)$ and $\overline{N}(r,F)=\overline{N}(r,G)$ and $H\not\equiv 0$, where  $F$, $G$ and $H$ are defined as earlier. 
 Then \beas N(r,\infty;H) &\leq& \overline{N}(r,\infty;F)+\overline{N}(r,0;F|\geq 2)+\overline{N}(r,0;G|\geq 2)+\overline{N}_{0}(r,0;F')+\overline{N}_{0}(r,0;G')\\
 &&+\overline{N}_{L}(r,1;F)+\overline{N}_{L}(r,1;G)+S(r,f). \eeas
 \end{lem}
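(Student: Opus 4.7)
The plan is a direct local-residue analysis of $H$. First I would observe that
\[
\frac{F''}{F'}-\frac{2F'}{F-1}=\frac{d}{dz}\log\frac{F'}{(F-1)^{2}},
\]
and similarly for the $G$-analog, so each summand of $H$ is a logarithmic derivative and therefore has at worst simple poles. Their difference $H$ then also has at worst simple poles, which is the key structural fact, since it yields $N(r,\infty;H)=\overline{N}(r,\infty;H)$. Moreover the singular set of $H$ is contained in the union of the poles of $F$ and $G$, the $1$-points of $F$ and $G$, the multiple zeros of $F$ and $G$, and the remaining zeros of $F'$ and $G'$. So it suffices to enumerate these candidate pole locations and to bound the sum of their $\overline{N}$-contributions.

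Next I would carry out Laurent-expansion computations at each candidate. The crucial residues are: at a common pole of orders $t,s$ in $F,G$, the principal part of $H$ is $(t-s)/(z-z_{0})$, vanishing iff $t=s$; at a common $1$-point of multiplicities $p,q$, the principal part is $(q-p)/(z-z_{0})$, vanishing iff $p=q$; at a multiple zero of $F$ of order $m\geq 2$, $F''/F'$ contributes a simple pole of residue $m-1$ while $2F'/(F-1)$ is analytic; at a zero of $F'$ of order $k\geq 1$ not lying on $F(F-1)=0$, $F''/F'$ contributes a simple pole of residue $k$ with the other terms analytic. Symmetric contributions come from the $G$-side.

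Then I would invoke the hypotheses to cancel the matched contributions. Because $F,G$ share $(1,l)$, every $1$-point of $F$ of multiplicity $m\leq l$ is a $1$-point of $G$ of the \emph{same} multiplicity $m$, so the cancellation $p=q$ kicks in at every such point. The only $1$-points contributing to the pole set of $H$ are therefore those with $p\neq q$ and both multiplicities exceeding $l$, and each such point lies in exactly one of $\overline{N}_{L}(r,1;F)$ (when $p>q$) or $\overline{N}_{L}(r,1;G)$ (when $q>p$). Since $\overline{N}(r,\infty;F)=\overline{N}(r,\infty;G)$, any poles not common to both functions contribute only $S(r,f)$, and all surviving common-pole contributions are bounded by $\overline{N}(r,\infty;F)$. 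The multiple-zero and extraneous-$F'$-zero contributions feed directly into $\overline{N}(r,0;F|\geq 2)+\overline{N}(r,0;G|\geq 2)$ and $\overline{N}_{0}(r,0;F')+\overline{N}_{0}(r,0;G')$ respectively. Summing the five bounds gives the stated inequality.

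The main delicate point — and the real obstacle to handle carefully — is the local bookkeeping: one must verify that each surviving pole of $H$ is charged to exactly one summand on the right-hand side, and that the principal-part computation at common $1$-points really gives complete cancellation precisely when multiplicities match. The residue calculation $(q-p)/(z-z_{0})$ at a common $1$-point is the heart of the argument, since it is what makes the $(1,l)$-sharing hypothesis usable; once these Laurent expansions are nailed down and the simple-pole property of $H$ is exploited, the claimed bound follows.
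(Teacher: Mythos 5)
Your proof is correct: the key Laurent-expansion computations (residue $t-s$ at a common pole of orders $t,s$, residue $q-p$ at a common $1$-point of multiplicities $p,q$, residue $m-1$ at a multiple zero, residue $k$ at an extraneous zero of $F'$), together with the observation that $H$ is a difference of logarithmic derivatives and hence has only simple poles, are exactly the standard argument. The paper itself gives no proof, quoting the lemma from Banerjee--Majumder \cite{1}, where it is established by essentially this same local analysis, so your blind reconstruction matches the intended proof.
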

\begin{lem}\label{l12} Let $F$ and $G$ share $(1,l)$.\\ Then $\overline{N}_{L}(r,1;F)\leq \frac{1}{2}\overline{N}(r,\infty;F)+\frac{1}{2}\overline{N}(r,0;F)+S(r,F)$ if $l\geq 1$\\
  and  $\overline{N}_{L}(r,1;F)\leq \overline{N}(r,\infty;F)+\overline{N}(r,0;F)+S(r,F)$ if $l=0$.\end{lem}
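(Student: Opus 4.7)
The plan is to bound $\overline{N}_{L}(r,1;F)$ by analyzing the logarithmic derivative $F'/F$. Its only poles are simple and are located precisely at the zeros and poles of $F$, so
\begin{equation*}
N\bigl(r,\infty;F'/F\bigr)=\overline{N}(r,0;F)+\overline{N}(r,\infty;F).
\end{equation*}
Combining this with the standard bound $m(r,F'/F)=S(r,F)$ from the lemma on the logarithmic derivative and then invoking the first fundamental theorem yields
\begin{equation*}
N\bigl(r,0;F'/F\bigr)\leq T(r,F'/F)+O(1)=\overline{N}(r,0;F)+\overline{N}(r,\infty;F)+S(r,F).
\end{equation*}

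The second step is to produce a matching lower bound on $N(r,0;F'/F)$ coming from multiple $1$-points of $F$. At a $1$-point $z_{0}$ of $F$ of multiplicity $p$, since $F(z_{0})=1\neq 0$, the quotient $F'/F$ has a zero there of order exactly $p-1$. Consequently every point counted by $\overline{N}_{L}(r,1;F)$ --- which by Definition 1.5 has $F$-multiplicity $p$ strictly greater than its $G$-multiplicity $q\geq 1$ --- contributes at least $p-1$ to $N(r,0;F'/F)$, so that
\begin{equation*}
\sum_{z_{0}}\bigl(p(z_{0})-1\bigr)\leq N(r,0;F'/F),
\end{equation*}
where the sum runs over the points counted by $\overline{N}_{L}(r,1;F)$.

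It then remains only to extract a lower bound on $p-1$ from the weighted sharing hypothesis. For $l=0$ one merely has $p\geq 2$, so $\overline{N}_{L}(r,1;F)\leq N(r,0;F'/F)$, and the estimate above gives the IM bound. For $l\geq 1$ the definition of $(1,l)$-sharing forces any $1$-point whose multiplicity does not exceed $l$ in one of $F,G$ to have exactly the same multiplicity in the other; hence at an $\overline{N}_{L}$ point both $p$ and $q$ must exceed $l$, giving $q\geq l+1\geq 2$ and therefore $p\geq 3$. Thus $p-1\geq 2$ and $2\,\overline{N}_{L}(r,1;F)\leq N(r,0;F'/F)$, which combined as before produces the factor $\tfrac12$ in the weighted bound. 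The only subtle point, which I take to be the main (though modest) obstacle, is correctly unpacking the definition of $(1,l)$-sharing to rule out $q=1$ at an $\overline{N}_{L}$ point when $l\geq 1$; everything else is a routine application of Nevanlinna's first main theorem and the lemma on the logarithmic derivative.
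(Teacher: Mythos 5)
Your proof is correct and follows essentially the same route as the paper: you deduce that a $1$-point counted in $\overline{N}_{L}(r,1;F)$ has multiplicity $p\geq 3$ when $l\geq 1$ (via $q\geq l+1$) and $p\geq 2$ when $l=0$, and then bound the resulting zeros of $F'$ at points where $F\neq 0$ by $\overline{N}(r,0;F)+\overline{N}(r,\infty;F)+S(r,F)$. The only difference is cosmetic: the paper invokes the standard estimate $\overline{N}_{L}(r,1;F)\leq \frac{1}{2}\overline{N}(r,0;F'\mid F\neq 0)\leq \frac{1}{2}\overline{N}(r,\infty;F)+\frac{1}{2}\overline{N}(r,0;F)+S(r,F)$ directly, whereas you unfold it explicitly through $N(r,0;F'/F)\leq T(r,F'/F)+O(1)$ and the lemma on the logarithmic derivative.
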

\begin{proof}Let $l\geq1$. Then multiplicity of any 1-point of $F$ counted in $ \overline{N}_{L}(r,1;F)$ is at least 3 as $l\geq1$.\\ So, $\overline{N}_{L}(r,1;F) \leq \frac{1}{2}\overline{N}(r,0;F'|F\neq 0) \leq \frac{1}{2}\overline{N}(r,\infty;F)+\frac{1}{2}\overline{N}(r,0;F)+S(r,F)$.\\
 Let $l=0$. Then multiplicity of any 1-point of $F$ counted in $ \overline{N}_{L}(r,1;F)$ is at least 2 as $l=0$.\\
 So, $\overline{N}_{L}(r,1;F) \leq \overline{N}(r,0;F'|F\neq 0) \leq \overline{N}(r,\infty;F)+\overline{N}(r,0;F)+S(r,F)$.\end{proof}
\begin{lem}\label{l13} Let $F$ and $G$ share $(1,l)$ and $H \not\equiv 0$. Then \beas \overline{N}(r,1;F)+ \overline{N}(r,1;G) &\leq& N(r,\infty;H) + \overline{N}^{(2}_{E}(r,1;F)+\overline{N}_{L}(r,1;F)+\overline{N}_{L}(r,1;G)\\ && +\overline{N}(r,1;G)+S(r,f).\eeas \end{lem}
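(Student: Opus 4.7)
The plan is to extract from $\overline{N}(r,1;F)$ precisely the term $N^{1)}_{E}(r,1;F)$ counting simple common $1$-points of $F$ and $G$, since this is the only piece of the left-hand side not already displayed on the right, and to bound it by $N(r,\infty;H)+S(r,f)$ using the given auxiliary function $H$. The rest will be pure bookkeeping of multiplicities forced by the weighted sharing hypothesis.

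Because $F$ and $G$ share $(1,l)$ with $l\geq 0$, every $1$-point of $F$ is also a $1$-point of $G$. Classifying each such common point by the pair $(p,q)$ of multiplicities of $F$ and $G$ at the point gives four mutually exclusive cases: $p=q=1$, $p=q\geq 2$, $p>q\geq 1$, and $q>p\geq 1$, contributing respectively to $N^{1)}_{E}(r,1;F)$, $\overline{N}^{(2}_{E}(r,1;F)$, $\overline{N}_{L}(r,1;F)$ and $\overline{N}_{L}(r,1;G)$. This yields the clean identity
\[
\overline{N}(r,1;F) = N^{1)}_{E}(r,1;F) + \overline{N}^{(2}_{E}(r,1;F) + \overline{N}_{L}(r,1;F) + \overline{N}_{L}(r,1;G).
\]

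For the main estimate on $N^{1)}_{E}(r,1;F)$, I would take a simple common $1$-point $z_{0}$ and write $F(z)-1=a(z-z_{0})+O((z-z_{0})^{2})$ and $G(z)-1=b(z-z_{0})+O((z-z_{0})^{2})$ with $a,b\neq 0$. A direct local computation gives $F''/F'-2F'/(F-1)=-2/(z-z_{0})+O(z-z_{0})$ and the analogous identity with $G$ in place of $F$, so the simple poles cancel in $H$ and $H$ is holomorphic (in fact vanishing) at $z_{0}$. Since $H\not\equiv 0$, this yields $N^{1)}_{E}(r,1;F)\leq N(r,0;H)$, and combining with $N(r,0;H)\leq T(r,H)+O(1)\leq N(r,\infty;H)+m(r,H)+O(1)$ together with the logarithmic derivative estimate $m(r,H)=S(r,f)$ produces $N^{1)}_{E}(r,1;F)\leq N(r,\infty;H)+S(r,f)$. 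Adding $\overline{N}(r,1;G)$ to both sides of the decomposition and substituting this bound gives the inequality stated in the lemma.

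The only delicate step is the local Laurent computation; the coefficient of $1/(z-z_{0})$ in $2F'/(F-1)$ turns out to be $2$ independently of the leading coefficient $a$, so the cancellation in $H$ takes place regardless of whether $a$ and $b$ agree, and once this is noticed the argument is essentially automatic.
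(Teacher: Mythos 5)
Your proposal is correct and takes essentially the same approach as the paper: the paper's proof consists of exactly your decomposition $\overline{N}(r,1;F)=N^{1)}_{E}(r,1;F)+\overline{N}^{(2}_{E}(r,1;F)+\overline{N}_{L}(r,1;F)+\overline{N}_{L}(r,1;G)$, followed by the assertion (dismissed there as a ``simple calculation'') that $N(r,1;F\mid =1)\leq N(r,0;H)+S(r,f)\leq N(r,\infty;H)+S(r,f)$. Your local Laurent expansion showing $H$ vanishes at simple common $1$-points, together with the first fundamental theorem and $m(r,H)=S(r,f)$, simply fills in the details the paper leaves implicit.
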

\begin{proof} Clearly, $\overline{N}(r,1;F) = N(r,1;F|=1)+ \overline{N}^{(2}_{E}(r,1;F)+\overline{N}_{L}(r,1;F)+\overline{N}_{L}(r,1;G)$.\\ and by simple calculation, $N(r,1;F|=1)\leq N(r,0;H)+S(r,f)\leq N(r,\infty;H)+S(r,f)$.\end{proof}
\begin{lem}\label{l14}
Let $f$ be a non constant meromorphic function and $a(z)$ be a small function of $f$. Let $F=\frac{f^{n}}{a}$ and $G=\frac{M}{a}$ such that $F$ and $G$ shares $(1,\infty)$. Then one of the following cases holds:
\begin{enumerate}
\item $T(r) \leq N_{2}(r,0;F)+N_{2}(r,0;G)+\ol{N}(r,\infty;F)+\ol{N}(r,\infty;G)$\par
$+\ol{N}_{L}(r,\infty;F)+\ol{N}_{L}(r,\infty;G)+S(r),$
\item  $F\equiv G,$
\item $FG\equiv 1$.
\end{enumerate}
where  $T(r)=\max\{T(r,F),T(r,G)\}$ and $S(r)=o(T(r))$, $r\in I$, $I$ is a set of infinite linear measure of $r\in(0,\infty)$.
\end{lem}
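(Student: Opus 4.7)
The plan is to apply the standard $H$-function technique, where $H$ is as in \eqref{e2.1}, and to distinguish the two cases $H\not\equiv 0$ and $H\equiv 0$. Assume first that $H\not\equiv 0$. I would add the two inequalities produced by Nevanlinna's second fundamental theorem applied to $F$ and to $G$, retaining the usual terms $\overline{N}_{0}(r,0;F')$ and $\overline{N}_{0}(r,0;G')$ counting zeros of $F'$ and $G'$ that lie outside the sets $\{F=0,1\}$ and $\{G=0,1\}$ respectively. Next I would estimate $\overline{N}(r,1;F)+\overline{N}(r,1;G)$ by Lemma \ref{l13}, noting that since $F,G$ share $(1,\infty)$ one has $\overline{N}_{L}(r,1;F)=\overline{N}_{L}(r,1;G)=0$, while $\overline{N}^{(2}_{E}(r,1;F)$ can be absorbed into $\tfrac{1}{2}N_{2}(r,0;F)+\tfrac{1}{2}N_{2}(r,0;G)$ together with pole contributions. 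To control $N(r,\infty;H)$ one then performs a direct local analysis at each singularity: poles of $H$ come from multiple zeros of $F$, multiple zeros of $G$, from zeros of $F'$ and $G'$ not already handled, and from poles of $F$ and $G$ that are not matched with equal multiplicity. This analysis will yield a bound of the shape
\[N(r,\infty;H)\leq N_{2}(r,0;F)+N_{2}(r,0;G)+\overline{N}_{L}(r,\infty;F)+\overline{N}_{L}(r,\infty;G)+\overline{N}_{0}(r,0;F')+\overline{N}_{0}(r,0;G')+S(r),\]
and substituting everything back should deliver the desired inequality (1).

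If instead $H\equiv 0$, a double integration produces an identity of the form
\[\frac{1}{F-1}=\frac{C}{G-1}+D,\qquad C\neq 0,\]
for some constants $C,D$. The sub-cases $(C,D)=(1,0)$ and $(C,D)=(-1,-1)$ give conclusions (2) and (3) directly. In every remaining sub-case $F$ is a non-trivial M\"obius transform of $G$, so $G$ acquires a Picard-type exceptional value $b\in\mathbb{C}\setminus\{0,1\}$; applying the second fundamental theorem to $G$ at the three values $0,\,b,\,\infty$ --- together with Lemmas \ref{l1}--\ref{l3} used to relate $T(r,G)$ and $T(r,f)$ --- forces conclusion (1) to hold.

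The main obstacle will be the pole analysis of $H$ in Case 1. Because $F$ and $G$ are not assumed to share poles here, Lemma \ref{l11} cannot be invoked verbatim; instead, at each point $z_{0}$ that is a pole of $F$ of order $p\geq 0$ and of $G$ of order $q\geq 0$ with $p+q\geq 1$, one must verify by a direct Laurent expansion that the order of $H$ at $z_{0}$ is captured exactly by the contribution to $\overline{N}(r,\infty;F)+\overline{N}(r,\infty;G)$ coming from the second fundamental theorem together with the excess-multiplicity terms $\overline{N}_{L}(r,\infty;F)+\overline{N}_{L}(r,\infty;G)$ appearing in the statement. Once this point-wise computation is carried out cleanly, the three alternatives of the lemma drop out by combining it with the steps sketched above.
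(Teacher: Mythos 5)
Your overall architecture is exactly the paper's: the paper likewise runs the $H$-function dichotomy, bounds $N(r,\infty;H)$, and then defers the remaining bookkeeping to the proof of Lemma 2.13 of \cite{1.1}. But one of your worries is misplaced and two of your concrete steps fail as stated. The misplaced worry first: you assert that ``$F$ and $G$ are not assumed to share poles here,'' whereas they \emph{do} share poles by construction, and the paper's proof opens with precisely this observation: any pole of $f$ that is neither a zero nor a pole of $a$ is simultaneously a pole of $F=f^{n}/a$ and of $G=M[f]/a$ (the poles of the monomial $M[f]$ occur exactly at the poles of $f$, with no cancellation possible in a monomial), and the exceptional points coming from $a$ contribute only $S(r,f)$. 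This is what legitimizes the bound
\beas N(r,\infty;H) &\leq& \ol{N}(r,0;F\mid\geq 2)+\ol{N}(r,0;G\mid\geq 2)+\ol{N}_{L}(r,\infty;F)+\ol{N}_{L}(r,\infty;G)\\
&&+\ol{N}_{0}(r,0;F')+\ol{N}_{0}(r,0;G')+S(r,f),\eeas
in which only multiplicity-mismatched \emph{common} poles survive: at a common pole of respective orders $p$ and $q$ the two halves of $H$ contribute residues $(p-1)$ and $(q-1)$, so $H$ is regular there exactly when $p=q$. An unshared multiple pole would be covered by none of your listed terms, so without the sharing observation your Laurent analysis does not close.

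The genuine gaps: (i) your proposed bound for $N(r,\infty;H)$ carries $N_{2}(r,0;F)+N_{2}(r,0;G)$ where the reduced functions $\ol{N}(r,0;F\mid\geq2)+\ol{N}(r,0;G\mid\geq2)$ are required; after adding the terms $\ol{N}(r,0;F)+\ol{N}(r,0;G)$ coming from the second fundamental theorem, your version produces $\ol{N}(r,0;F)+N_{2}(r,0;F)$, which overshoots the $N_{2}(r,0;F)$ budget in alternative (1) by a full $\ol{N}(r,0;F)$, and the target inequality is lost. (ii) Your claim that $\ol{N}^{(2}_{E}(r,1;F)$ ``can be absorbed into $\frac{1}{2}N_{2}(r,0;F)+\frac{1}{2}N_{2}(r,0;G)$'' is false: the points counted there are $1$-points, not zeros, of $F$ and $G$, and no such absorption is possible. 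The correct step, which is the whole point of Lemma \ref{l13}, is $\ol{N}^{(2}_{E}(r,1;F)+\ol{N}(r,1;G)\leq N(r,1;G)\leq T(r,G)+O(1)$ (each point counted in $\ol{N}^{(2}_{E}$ has multiplicity at least $2$ in $G$), after which one subtracts $T(r,G)$ from the summed second-fundamental-theorem inequality to bound $T(r,F)$, and argues symmetrically for $T(r,G)$. Your $H\equiv 0$ half is essentially sound --- two integrations give $\frac{1}{F-1}=\frac{C}{G-1}+D$, and $(C,D)=(1,0)$, $(-1,-1)$ yield alternatives (2) and (3) --- but in the remaining sub-cases the value $b$ need not be Picard-exceptional for $G$: rather $\ol{N}(r,b;G)$ is identified, via the M\"{o}bius relation, with $\ol{N}(r,0;F)$ or $\ol{N}(r,\infty;F)$, which already sit on the right-hand side of (1), and in some sub-cases the shared-pole fact is needed once more to force $\ol{N}(r,\infty;G)=S(r)$ before the second fundamental theorem at $0$, $b$, $\infty$ delivers alternative (1).
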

\begin{proof}
 Let $z_{0}$ be a pole of $f$ which is not a pole or zero of $a(z)$. Then $z_{0}$ is a pole of $F$ and $G$ simultaneously. Thus $F$ and $G$ share those pole of $f$ which is not zero or pole of $a(z)$.
Clearly
\beas N(r,H) &\leq& \ol{N}(r,0;F\geq2)+\ol{N}(r,0;G\geq2)+\ol{N}_{L}(r,\infty;F)+\ol{N}_{L}(r,\infty;G)\\
&+& \ol{N}_{0}(r,0;F')+\ol{N}_{0}(r,0;G')+S(r,f)\eeas
Now the proof can be carried out in the line of proof of Lemma 2.13 of \cite{1.1}. So we omit the details.
\end{proof}
\section {Proof of the theorem}
\begin{proof} Let $F=\frac{f^{n}}{a}$ and $G=\frac{M[f]}{a}$. Then $F-1=\frac{f^{n}-a}{a}$, $G-1=\frac{M[f]-a}{a}$. Since $f^{n}$ and $M[f]$ share $(a,l)$, it follows that $F$ and $G$ share $(1,l)$ except the zeros and poles of $a(z)$. Now we consider the following cases.\\
{\bf Case 1} Let $H\not\equiv 0$.\\
\textbf{Subcase-1.1.} $l\geq 1$\\
Using the Second Fundamental Theorem and Lemmas \ref{l13}, \ref{l11} we get\par
\bea\nonumber T(r,F)+T(r,G) &\leq& \overline{N}(r,\infty;F)+\overline{N}(r,\infty;G)+\overline{N}(r,0;F)+\overline{N}(r,0;G)+N(r,H) \\
&& \nonumber+ \overline{N}^{(2}_{E}(r,1;F)+\overline{N}_{L}(r,1;F)+\overline{N}_{L}(r,1;G)+\overline{N}(r,1;G)\\
&& \nonumber
-\overline{N}_{0}(r,0;F^{'})-\overline{N}_{0}(r,0;G^{'})+S(r,f)\\
& \label{t2} \leq& 2\overline{N}(r,\infty;F)+\overline{N}(r,\infty;G)+N_{2}(r,0;F)+N_{2}(r,0;G) +\overline{N}^{(2}_{E}(r,1;F)\\
&& \nonumber +2\overline{N}_{L}(r,1;F)+2\overline{N}_{L}(r,1;G)+\overline{N}(r,1;G) +S(r,f).\eea
\textbf{Subsubcase-1.1.1.} For $l=1$\\
From inequality (\ref{t2}) and in view of Lemmas \ref{l12}, \ref{l9} we get
\beas T(r,F)+T(r,G) & \leq& 2\overline{N}(r,\infty;F)+\overline{N}(r,\infty;G)+N_{2}(r,0;F)+N_{2}(r,0;G) +\overline{N}^{(2}_{E}(r,1;F)\\
&& +2\overline{N}_{L}(r,1;F)+2\overline{N}_{L}(r,1;G)+\overline{N}(r,1;G)+S(r,f)\\
& \leq& \frac{5}{2}\overline{N}(r,\infty;F)+\overline{N}(r,\infty;G)+\frac{1}{2}\overline{N}(r,0;F)+\mu_{2}N_{\mu_{2}^{*}}(r,0;f)+N_{2}(r,0;G)\\
&&+\overline{N}^{(2}_{E}(r,1;F)+\overline{N}_{L}(r,1;F)+2\overline{N}_{L}(r,1;G)+\overline{N}(r,1;G)+S(r,f)\\
& \leq& \frac{5}{2}\overline{N}(r,\infty;F)+\overline{N}(r,\infty;G)+\frac{1}{2}\overline{N}(r,0;F))+\mu_{2}N_{\mu_{2}^{*}}(r,0;f)+N_{2}(r,0;G)\\
&&+N(r,1;G)+S(r,f).\eeas
i.e., for any $\varepsilon > 0$
\beas nT(r,f) &\leq& (\lambda+\frac{7}{2})\overline{N}(r,\infty;f)+\frac{1}{2}\overline{N}(r,0;f)+\mu_{2}N_{\mu_{2}^{*}}(r,0;f)+d_{M}N_{2+k}(r,0;f)+S(r,f)\\
& \leq& \{(\lambda+\frac{7}{2})-(\lambda+\frac{7}{2})\Theta(\infty,f)+\frac{1}{2}-\frac{1}{2}\Theta(0,f)+\mu_{2}-\mu_{2}\delta_{\mu_{2}^{*}}(0,f)\\
&&+d_{M}-d_{M}\delta_{2+k}(0,f)+\varepsilon\}T(r,f)+S(r,f).\eeas\\
i.e., $\{(\lambda+\frac{7}{2})\Theta(\infty,f)+\frac{1}{2}\Theta(0,f)+\mu_{2}\delta_{\mu_{2}^{*}}(0,f)+d_{M}\delta_{2+k}(0,f)-\varepsilon\}T(r,f)\\ \leq (\Gamma_{M}+\mu_{2}+4-n)T(r,f)+S(r,f)$, which is a contradiction.
\\
\textbf{Subsubcase-1.1.2.} For $l\geq 2$\\
Now by using the inequality (\ref{t2}) and Lemma \ref{l9}, we get
\beas T(r,F)+T(r,G) & \leq& 2\overline{N}(r,\infty;F)+\overline{N}(r,\infty;G)+N_{2}(r,0;F)+N_{2}(r,0;G) +\overline{N}^{(2}_{E}(r,1;F)
\\
&&+2\overline{N}_{L}(r,1;F)+2\overline{N}_{L}(r,1;G)+\overline{N}(r,1;G)+S(r,f)\\
& \leq& 2\overline{N}(r,\infty;F)+\overline{N}(r,\infty;G)+\mu_{2}N_{\mu_{2}^{*}}(r,0;f)+N_{2}(r,0;G)+ N(r,1;G)\\
&& +S(r,f).\eeas
i.e., for any $\varepsilon > 0$
\beas nT(r,f) &\leq & (\lambda+3)\overline{N}(r,\infty;f)+\mu_{2}N_{\mu_{2}^{*}}(r,0;f)+d_{M}N_{2+k}(r,0;f)+S(r,f)\\
& \leq & \{(\lambda+3)-(\lambda+3)\Theta(\infty,f)+\mu_{2}-\mu_{2}\delta_{\mu_{2}^{*}}(0,f)\\
&& +d_{M}-d_{M}\delta_{2+k}(0,f)+\varepsilon\}T(r,f)+S(r,f).\eeas
i.e., $\{(\lambda+3)\Theta(\infty,f)+\mu_{2}\delta_{\mu_{2}^{*}}(0,f)+d_{M}\delta_{2+k}(0,f)-\varepsilon\}T(r,f)\\
\leq (\Gamma_{M}+3+\mu_{2}-n)T(r,f)+S(r,f)$, which is a contradiction.\\
\textbf{Subcase-1.2.}
$l=0$\\
Then by using the Second Fundamental Theorem and Lemma \ref{l13}, \ref{l11}, \ref{l12}, \ref{l9} we get\\
\bea \nonumber T(r,F)+T(r,G) \nonumber&\leq& \overline{N}(r,\infty;F)+\overline{N}(r,0;F)+\overline{N}(r,1;F)+\overline{N}(r,\infty;G) +\overline{N}(r,0;G)\\
\nonumber && +\overline{N}(r,1;G)-\overline{N}_{0}(r,0;F')-\overline{N}_{0}(r,0;G')+S(r,F)+S(r,G)\\
\nonumber & \leq & \overline{N}(r,\infty;F)+\overline{N}(r,0;F)+\overline{N}(r,\infty;G)+\overline{N}(r,0;G)+N(r,\infty;H)\\
\nonumber && +\overline{N}^{(2}_{E}(r,1;F)+\overline{N}_{L}(r,1;F)+\overline{N}_{L}(r,1;G)+\overline{N}(r,1;G)\\
\nonumber && -\overline{N}_{0}(r,0;F')-\overline{N}_{0}(r,0;G')+S(r,F)+S(r,G)\\
\nonumber & \leq& 2\overline{N}(r,\infty;F)+\overline{N}(r,\infty;G)+N_{2}(r,0;F)+N_{2}(r,0;G)+\overline{N}^{(2}_{E}(r,1;F)\\
\nonumber && +2\overline{N}_{L}(r,1;F)+2\overline{N}_{L}(r,1;G)+\overline{N}(r,1;G)+S(r,f) \\
\nonumber & \leq & 2\overline{N}(r,\infty;F)+\overline{N}(r,\infty;G)+\mu_{2}N_{\mu_{2}^{*}}(r,0,f)+N_{2}(r,0;G)\\
\nonumber && +2(\overline{N}(r,\infty;F)+\overline{N}(r,0;F))+\overline{N}(r,\infty;G)+\overline{N}(r,0;G)\\
\nonumber && +\overline{N}^{(2}_{E}(r,1;F)+\overline{N}_{L}(r,1;G)+\overline{N}(r,1;G)+S(r,f)\\
&\label{r11} \leq&  4\overline{N}(r,\infty;F)+\mu_{2}N_{\mu_{2}^{*}}(r,0,f)+N_{2}(r,0;G)+2\overline{N}(r,\infty;G)\\
\nonumber && +\overline{N}(r,0;G)+2\overline{N}(r,0;F)
+T(r,G)+S(r,f)\eea
i.e., for any $\varepsilon > 0$
\beas nT(r,f) &\leq& (2\lambda+6)\overline{N}(r,\infty;f)+2\overline{N}(r,0;f)+\mu_{2}N_{\mu_{2}^{*}}(r,0,f)+d_{M}N_{1+k}(r,0;f)\\
\nonumber && +d_{M}N_{2+k}(r,0;f)+S(r,f)\\
& \leq& \{(2\lambda+6)-(2\lambda+6)\Theta(\infty,f)+2-2\Theta(0,f)+\mu_{2}-\mu_{2}\delta_{\mu_{2}^{*}}(0,f)\\
&& +2d_{M}-d_{M}\delta_{1+k}(0,f)-d_{M}\delta_{2+k}(0,f)+\varepsilon\}T(r,f)+S(r,f).\eeas\\
i.e., \beas &&\{(2\lambda+6)\Theta(\infty,f)+2\Theta(0,f)+\mu_{2}\delta_{\mu_{2}^{*}}(0,f)+d_{M}\delta_{1+k}(0,f)+d_{M}\delta_{2+k}(0,f)-\varepsilon\}T(r,f)\\ &\leq& (2\Gamma_{M}+8+\mu_{2}-n)T(r,f)+S(r,f),\eeas which is a contradiction.\\

{\bf Case 2.} Let $H\equiv 0$.\\
On Integration we get,
\begin{equation}\label{3.3} \frac{1}{G-1}\equiv\frac{A}{F-1}+B,\end{equation}
 where $A(\neq 0), B$ are complex constants.
Then $F$ and $G$ share $(1,\infty)$. Also by construction of $F$ and $G$ we see that $F$ and $G$ share $(\infty,0)$ also. \par
So using Lemma \ref{l9} and condition (\ref{e1.12}), we obtain
\beas && N_{2}(r,0;F)+N_{2}(r,0;G)+\ol{N}(r,\infty;F)+\ol{N}(r,\infty;G)+\ol{N}_{L}(r,\infty;F)+\ol{N}_{L}(r,\infty;G)+S(r)\\
 &\leq& \mu_{2}N_{\mu_{2}^{*}}(r,0;f)+d_{M}N_{2+k}(r,0;f)+(\lambda+3)\overline{N}(r,\infty;f)+S(r)\\
 &\leq&\{(3+\lambda+d_{M}+\mu_{2})-((\lambda+3)\Theta(\infty,f)+\delta_{\mu_{2}^{*}}(0,f)+d_{M}\delta_{2+k}(0,f))\}T(r,f)+S(r)\\
 &<& T(r,F)+S(r)\eeas
Hence inequality (1) of Lemma \ref{l14} does not hold. Again in view of Lemma \ref{l10}, we get  $F\equiv G$, i.e., $f^{n} \equiv M[f]$.
\end{proof}
\begin{center} {\bf Acknowledgement} \end{center}
This research work is supported by the Council Of Scientific and Industrial Research, Extramural
Research Division, CSIR Complex, Pusa, New Delhi-110012, India, under the sanction project no. 25(0229)/14/EMR-II.
The authors also wish to thank the referee for his/her valuable remarks and suggestions towards the improvement of the paper.


\begin{thebibliography}{99}
\bibitem{1.1} A. Banerjee, Uniqueness of Meromorphic Functions That Share Two Sets, Southeast Asian Bulletin of Mathematics, 31(2007), 7-17.
\bibitem{1} A. Banerjee and S. Majumder, On the uniqueness of a power of a meromorphic function sharing a small function with the power of its derivative, Comment. Math. Univ. Carolin., 51(4) (2010), 565-576.
\bibitem{3} R. Br\"{u}ck, On entire functions which share one value CM with their first derivative, Results in Math., 30(1996), 21-24.
\bibitem{2a} K. S. Charak and B. Lal, Uniqueness of $f^{n}$ and $P[f]$, arXiv:1501.05092v1 [math.CV] 21 Jan 2015.
\bibitem{2} A. Chen and G. Zhang, Unicity of meromorphic function and its derivative, Kyungpook Math. J., 50(1)(2010), 71-80.
\bibitem{4} W. K. Hayman, Meromorphic Functions, The Clarendon Press, Oxford (1964).
\bibitem{4a} I. Lahiri, Weighted value sharing and uniqueness of meromorphic functions, Complex Var. Theory Appl., 46 (2001), 241-253.
\bibitem{5} I. Lahiri and A. Sarkar, Uniqueness of meromorphic function and its derivative, {\it J.Inequal.Pure Appl. Math.}, 5(1)(2004), Art.20 [ONLINE {\it  http://jipam.vu.edu.au/}].
\bibitem{5a} J. D. Li and G. X. Huang, On meromorphic functions that share one small function with their derivatives, Palestine J. Math., 4(1)(2015), 91-96.
\bibitem{6} A. Z. Mokhon'ko, On the Nevanlinna characteristics of some meromorphic functions, in \enquote{Theory of Functions, functional analysis and their applications}, Izd-vo Khar'kovsk, Un-ta, 14 (1971), 83-87.
\bibitem{7} E. Mues and N. Steinmetz, Meromorphe Funktionen die unit ihrer Ableitung Werte teilen, Manuscripta Math., 29(1979) 195-206.
\bibitem{8} L. A. Rubel and C. C. Yang, Values shared by an entire function and its derivative, Complex analysis (Proc. Conf., Univ. Kentucky, Lexington, Ky., 1976), Lecture Notes in Math., 599(1977), 101-103, Springer, Berlin.
\bibitem{9} Q. C. Zhang, The uniqueness of meromorphic functions with their derivatives, Kodai Math. J., 21(1998), 179-184.
\bibitem{10} Q. C. Zhang, Meromorphic function that shares one small function with its derivative.{\it J.Inequal.Pure Appl. Math.}, 6(4)(2005), Art.116 [ ONLINE {\it  http://jipam.vu.edu.au/}].
\bibitem{11} T. D. Zhang and W.R. L$\ddot u$, Notes on meromorphic function sharing one small function with its derivative, Complex Var. Ellip. Eqn., 53(9) (2008), 857-867.

\end{thebibliography}
\end{document}